\documentclass{article}
\usepackage{amsmath,amssymb,amsthm}
\usepackage{hyperref}
\usepackage{tikz-cd}
\usepackage{quiver}
\usepackage{graphicx}
\theoremstyle{plain}
\newtheorem{theorem}{Theorem}[section]
\newtheorem{lemma}[theorem]{Lemma}
\newtheorem{proposition}[theorem]{Proposition}
\newtheorem{corollary}[theorem]{Corollary}
\newtheorem{conjecture}[theorem]{Conjecture}

\theoremstyle{definition}

\theoremstyle{remark}
\newtheorem{remark}[theorem]{Remark}

\begin{document}

\title{Rational Homology Ribbon Cobordism is a Partial Order}
\author{William Ghanem}
\date{\today}
\maketitle

\begin{abstract}
We prove that rational homology ribbon cobordism defines a partial order on the set of homeomorphism classes of closed, connected, oriented 3-manifolds. More precisely, if $W_0 \leq W_1$ and $W_1 \leq W_0$, then $W_0$ and $W_1$ are homeomorphic by an orientation preserving homeomorphism.
\end{abstract}
\section{Introduction}
In \cite{Gordon1981RibbonConcordance}, Gordon introduced ribbon concordance of knots and conjectured that ribbon concordance defines a partial order on knots in $S^3$. In \cite{Agol2022RibbonConcordancePartialOrdering}, Agol proved this conjecture and suggested that the proof might resolve the analogous conjecture for rational homology ribbon cobordism, originally posed as Conjecture 1.1 in \cite{DaemiLidmanVelaVickWong2022RibbonHomologyCobordisms}. In \cite{Huber2023RibbonCobordismsPartialOrder}, Huber proves this conjecture in the case of aspherical 3-manifolds. Further work by Huber focuses on lens spaces: in \cite{Huber2021RibbonCobordismsLensSpaces}, Huber shows when there exists a rational homology ribbon cobordism between connected sums of lens spaces. Using results for lens spaces, Friedl-Misev-Zentner subsequently prove the irreducible case \cite{FriedlMisevZentner2025RibbonRH}. In this paper, we extend this result to all closed, connected, oriented 3-manifolds, thereby proving Conjecture 1.1 of \cite{DaemiLidmanVelaVickWong2022RibbonHomologyCobordisms}. Below, we recall the relevant definitions and notation needed to state Conjecture 1.1. \\

Let $W_0$, $W_1$ be closed, connected, oriented 3-manifolds. A \textit{ribbon cobordism} from $W_0$ to $W_1$ is a cobordism $W$ with $\partial W=-W_0\sqcup W_1$ such that $W$ is constructed from $W_0 \times [0,1]$ by attaching 1-handles and 2-handles to $W_0 \times \{1\}$. A \textit{rational homology ribbon cobordism} from $W_0$ to $W_1$ is a ribbon cobordism $W$ from $W_0$ to $W_1$ such that the inclusions $W_0 \hookrightarrow W$ and $W_1 \hookrightarrow W$ induce isomorphisms on rational homology groups. If there exists a rational homology ribbon cobordism between $W_0$ and $W_1$, we write $W_0 \leq W_1$. The conjecture we consider is the following, as stated in \cite{DaemiLidmanVelaVickWong2022RibbonHomologyCobordisms}.
\begin{conjecture}
The preorder on the set of homeomorphism classes of closed, connected, oriented 3-manifolds given by ribbon rational homology cobordism is a partial order, i.e. if one has $W_0\leq W_1$ and $W_1 \leq W_0$, then $W_0$ and $W_1$ are homeomorphic.
\end{conjecture}
In our work, we prove the conjecture. Reflexivity is given by the interval $W_0 \times [0,1]$. For transitivity, suppose that $W_0 \leq W_1$ via $W$ and $W_1 \leq W_2$ via $V$. Then $W_0\leq W_2$ via $WV$, the rational homology ribbon cobordism given by attaching $W$ to $V$ along $W_1$. In particular, all that remains is antisymmetry; we prove the following theorem.
\begin{theorem}
Suppose that $W_0 \leq W_1$ and $W_1 \leq W_0$. Then $W_0$ and $W_1$ are oriented homeomorphic.
\end{theorem}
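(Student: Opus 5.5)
We follow Agol's strategy for ribbon concordance, adapted to 3-manifolds and combined with prime decomposition.

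The first ingredient is the standard consequence of a ribbon cobordism $W$ from $W_0$ to $W_1$. Turning $W$ upside down, $W$ is built from $W_1$ by attaching only 2- and 3-handles. Hence $\pi_1(W_1)\to\pi_1(W)$ is surjective, and $\pi_1(W_0)\to\pi_1(W)$ is injective; the injectivity is the Gerstenhaber--Rothaus type argument used by Gordon, which relies on residual finiteness of 3-manifold groups (geometrization). Composing the two cobordisms $W_0\le W_1\le W_0$ yields a rational homology ribbon cobordism $V$ from $W_0$ to itself. Following Agol, we would consider the induced maps on representation varieties, or on the sets of finite quotients $\mathrm{Hom}(\pi_1,G)$ for finite groups $G$. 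Surjectivity and injectivity force the counts $|\mathrm{Hom}(\pi_1(W_0),G)|\le|\mathrm{Hom}(\pi_1(V),G)|\le|\mathrm{Hom}(\pi_1(W_0),G)|$. From this we aim to show that $\pi_1(W_0)\to\pi_1(V)$ is an isomorphism, and that $\pi_1(W_0)\cong\pi_1(W_1)$ with an isomorphism compatible with the cobordism. Equality of finite quotients together with the surjection $\pi_1(W_1)\to\pi_1(W)$, and Hopfian/residual finiteness properties, should give that $\pi_1(W_1)\to\pi_1(W)$ and $\pi_1(W_0)\to\pi_1(W)$ are both isomorphisms.

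Next we pass to prime decompositions. By Kneser--Milnor, write $W_0=\#P_i$ and $W_1=\#Q_j$. The rational homology condition gives $b_1(W_0)=b_1(W_1)$, so the numbers of $S^1\times S^2$ summands agree. The free-product decomposition of $\pi_1$ is unique up to conjugacy of factors by Kurosh. The isomorphism $\pi_1(W_0)\cong\pi_1(W_1)$ therefore matches the nontrivial freely indecomposable factors. Homotopy-sphere summands vanish by Poincar\'e.

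We then need to upgrade group-level matching to oriented homeomorphism of the prime summands. There are three cases.
\begin{itemize}
\item For aspherical summands and, more generally, summands with infinite $\pi_1$ that are not $S^1\times S^2$, the fundamental group determines the manifold (Huber's aspherical result and the cases treated by Friedl--Misev--Zentner). Orientation is handled using the induced map on $H_3$, which is compatible with the cobordism.
\item Spherical summands, in particular lens spaces, are not determined by $\pi_1$. Here we would invoke Huber's lens space results and the Friedl--Misev--Zentner irreducible case.
\item Restricting the cobordism to a summand is not automatic. The plan is to realize each prime summand via a covering argument: take the cover of $V$ corresponding to a factor subgroup, or use linking forms and Reidemeister torsion. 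Torsion is preserved by rational homology ribbon cobordisms up to the usual norm factor, as in Daemi--Lidman--Vela-Vick--Wong.
\end{itemize}

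The main obstacle we expect is the final step: transferring the local conclusion from the irreducible case to a connected sum, with orientation. Distinct lens summands $L(p,q)$ and $L(p,q')$ have isomorphic groups, so a global argument is needed that the multiset of oriented summands agrees. We would first try comparing Reidemeister torsions of $W_0$ and $W_1$ over all characters of $H_1$. These factor multiplicatively over summands. The two-sided inequalities from $W_0\le W_1\le W_0$ should then force equality of torsions, and Franz independence would identify the oriented lens summands.
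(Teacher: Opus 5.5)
There are genuine gaps, located at the two places where the real work of the proof lies.

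\textbf{Matching the prime summands.} Your plan does not yet justify matching prime summands through the cobordism. The Agol-type step (carried out by Friedl--Misev--Zentner, which you may simply cite) shows that $\pi_1(W_1)\to\pi_1(W)$ is an isomorphism, so you get an injection $\alpha\colon\pi_1(W_0)\to\pi_1(W_1)$. Nothing in your argument makes $\pi_1(W_0)\to\pi_1(W)$ surjective, and the paper neither proves nor needs this.

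Residual finiteness and Hopficity control surjective self-maps. Composing $W_0\le W_1\le W_0$ only gives an injective endomorphism $\beta\circ\alpha$ of $\pi_1(W_0)$, and 3-manifold groups are far from co-Hopfian. For instance, fix $a\in A$ and $b\in B$, both nontrivial, and set $c=ba$. Then $A\mapsto A$, $B\mapsto cBc^{-1}$ is an injective but non-surjective endomorphism of any free product $A*B$, and $\mathbb{Z}^3$ gives another example.

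Your finite-group count also fails. Injectivity of $\pi_1(W_0)\to\pi_1(V)$ does not let homomorphisms to a fixed finite $G$ extend over $V$: Gerstenhaber--Rothaus produces solutions in compact connected Lie groups, which is why Agol uses $SO(N)$ representation varieties.

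The $S^1\times S^2$ count is therefore unsupported as well. The equality $b_1(W_0)=b_1(W_1)$ cannot substitute for it, since aspherical summands also contribute to $b_1$ (compare $T^3$ with $\#^3 S^1\times S^2$).

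The idea you are missing is the fundamental-class identity $(\iota_0)_*[W_0]=(\iota_1)_*[W_1]$, pushed into $H_3(K(\pi,1))$. That group splits over the Grushko factors, and every irreducible summand contributes a nonzero class: $\mathbb{Z}$ for aspherical summands, $\mathbb{Z}/|\pi_1|$ for spherical ones. Applied to the composite self-cobordism of $W_0$, this forces every factor to be hit (Lemmas 4.3--4.4). It then yields a permutation $\sigma$ with $\alpha(\pi_1(M_i))=g_i\pi_1(N_{\sigma(i)})g_i^{-1}$. Surjectivity on each factor comes from a degree-one argument for aspherical summands and from finiteness for spherical ones. Because $S^1\times S^2$ summands are invisible in $H_3$, the paper must first remove them: Proposition 3.3 does this via $H_1^{irr}$, and Proposition 3.4 builds the modified cobordism $W'$.

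\textbf{Identifying the summands.} Huber's lens space results and the Friedl--Misev--Zentner irreducible theorem concern ribbon cobordisms between the whole manifolds. There is no ribbon cobordism between individual summands, so you cannot apply them summand by summand.

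For spherical summands that are not lens spaces, the paper instead realizes $\alpha_i$ by a map $M_i\to N_{\sigma(i)}$. This uses cellular approximation, since the extra cells of $K(\pi,1)$ have dimension at least $4$. It shows the map has degree $+1$ using $H_3(K(\pi,1))\cong\mathbb{Z}/|\pi_1|$, and then invokes the fact that such manifolds admit degree-one homotopy equivalences only to themselves.

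For lens spaces, your torsion plan is not yet an argument, for four reasons:
\begin{itemize}
\item Torsion of a connected sum is acyclic only for characters nontrivial on every summand.
\item The unknown summand $Y$ contributes factors that need not cancel or separate.
\item ``Two-sided inequalities'' give only mutual divisibility up to norms and units.
\item To compare characters of $H_1(W_0)$ and $H_1(W_1)$ at all, you need $W$ to be an \emph{integral} homology cobordism, which you never establish.
\end{itemize}
The paper proves exactly this integrality (Lemma 4.1 and Theorem 4.2, via $H_2(W,W_0)=0$ and the composite cobordism). It then applies Eismeier's cancellation theorem: if $L\#Y$ and $L'\#Y$ are integrally homology cobordant, then $L\cong L'$. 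That theorem is the precise result your final step gestures toward.
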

Finally, we introduce conventions for rational homology ribbon cobordisms. Let $W$ be a rational homology ribbon cobordism from $W_0$ to $W_1$. We denote the inclusion of $W_i$ by $\iota_i$, $i=0,1$. By Proposition 2.1 of \cite{DaemiLidmanVelaVickWong2022RibbonHomologyCobordisms}, $\iota_0$ induces an injection on $\pi_1$ and $\iota_1$ induces a surjection. If $(\iota_1)_*$ is also injective on $\pi_1$, and therefore an isomorphism, we denote by $\alpha$ the map on fundamental groups defined by the composition $\alpha=(\iota_1)_*^{-1}\circ(\iota_0)_*$. In particular, if we also have that $W_1\leq W_0$, then by Corollary 1.3 of \cite{FriedlMisevZentner2025RibbonRH}, $(\iota_1)_*$ is necessarily an isomorphism and the map $\alpha$ is defined.\\

We divide the proof into two parts: in the first part, we demonstrate how it suffices to prove the conjecture in the case that $W_0$ and $W_1$ have no $S^1 \times S^2$ summands. In the second part, we provide a proof for this subcase, which, in turn, proves Theorem 1.2.

\section{Preliminaries}
We begin with some definitions and conventions. The \textit{connected sum} of two closed, connected, oriented 3-manifolds $M$ and $N$ is given by $M\#N=(M \backslash B_M^{\circ} \cup N\backslash B^{\circ}_N)/(\partial B_M=\partial B_N)$ where $B_M$ and $B_N$ are 3-balls in $M$ and $N$ and the quotient refers to an orientation reversing homeomorphism of their bounding spheres. We call an \textit{identification} of $Y$ with $M \# N$ an orientation preserving homeomorphism $X\rightarrow Y$ where $X$ is a 3-manifold obtained by performing the connected sum of $M$ and $N$ with some 3-balls and some 2-sphere identification. An identification $Y\cong M\#N$ comes with an isomorphism $\pi_1(M)*\pi_1(N)\rightarrow \pi_1(Y)$ defined on $\pi_1(M)$ by the composition $\pi_1(M)\xleftarrow[]{\cong}\pi_1(M \backslash B_M)\rightarrow \pi_1(X)\rightarrow \pi_1(Y)$ where all arrows are induced by inclusion. Note that we have fixed a basepoint in $\partial B_M$. The definition on $\pi_1(N)$ is defined identically; we use the corresponding basepoint in $X$ from $\partial B_M$. We typically just write $Y\cong M\#N$ to declare some identification with $M\#N$ and refer to maps $\pi_1(M)\xrightarrow{i_{\pi_1(M)}}\pi_1(Y)$ and $\pi_1(N)\xrightarrow{i_{\pi_1(N)}} \pi_1(Y)$ as defined above. Further, we often write $\pi_1(Y)=\pi_1(M) *\pi_1(N)$ where each factor refers to the subgroup provided by the above inclusions. Larger connected sums $Y\cong M_1 \#...\# M_n$ are defined similarly and come equipped with similarly defined maps on $\pi_1$. Motivated from the connection between connected sums and free products, we prove the following short lemma.
\begin{lemma}
    Let $G=G_1 *...*G_n*F_m$ and $H=H_1*...*H_k*F_l$ be Grushko decompositions for finitely generated groups $G$ and $H$. Suppose that $f:G\rightarrow H$ is an injective homomorphism. Then $f(G_i)=hKh^{-1}$ where $K \subset H_j$ for some $j$.
\end{lemma}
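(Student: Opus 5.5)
The plan is to apply the Kurosh subgroup theorem to the image $f(G_i)\subseteq H$. In a Grushko decomposition the factors $G_i$ are nontrivial, freely indecomposable, and not isomorphic to $\mathbb{Z}$. Since $f$ is injective, $f(G_i)\cong G_i$, so $f(G_i)$ is a nontrivial, freely indecomposable subgroup of $H$ that is not infinite cyclic.

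Next I invoke Kurosh for the free product $H=H_1*\cdots*H_k*F_l$, writing $F_l$ as a free product of $l$ copies of $\mathbb{Z}$. Any subgroup $S\le H$ then decomposes as
\[
S = F * \mathop{\ast}_{\lambda} \bigl(S\cap h_\lambda A_\lambda h_\lambda^{-1}\bigr),
\]
where $F$ is free and each $A_\lambda$ is one of the factors $H_1,\dots,H_k$ or one of the copies of $\mathbb{Z}$ in $F_l$. Take $S=f(G_i)$. Because $S$ is freely indecomposable and nontrivial, exactly one term in this decomposition is nontrivial, and every other term is trivial.

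That nontrivial term cannot be $F$. If it were, $S$ would be a nontrivial freely indecomposable free group, hence $S\cong\mathbb{Z}$, which we excluded. So $S=S\cap hAh^{-1}$ for a single factor $A$ and some $h\in H$.

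If $A$ is a copy of $\mathbb{Z}$ inside $F_l$, then $S$ is a nontrivial subgroup of an infinite cyclic group, so $S\cong\mathbb{Z}$, again a contradiction. Therefore $A=H_j$ for some $j$, and $S\subseteq hH_jh^{-1}$. Setting $K=h^{-1}f(G_i)h\subseteq H_j$ gives $f(G_i)=hKh^{-1}$, as required.

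The main point requiring care is using the Grushko convention correctly: the factors $G_i$ are exactly the non-$\mathbb{Z}$ freely indecomposable factors, and the free group part is collected separately in $F_m$. This convention is what rules out both the free part and the cyclic factors as the home of $f(G_i)$. Finite generation is not needed beyond guaranteeing that the Grushko decompositions exist.
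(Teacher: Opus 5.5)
Your proof is correct and takes the same route as the paper: apply Kurosh's theorem to $f(G_i)$, then use that $f(G_i)\cong G_i$ is freely indecomposable to force it into a single conjugate of some $H_j$. Your version is slightly more careful, since you explicitly use the Grushko convention $G_i\not\cong\mathbb{Z}$ to rule out the free part and the conjugates of the cyclic factors of $F_l$; the paper leaves this step implicit.
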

\begin{proof}
    By Kurosh's theorem, $f(G_i)=F(X)*(*_{j\in J}h_jK_jh_j^{-1})$ where $J$ is some index set, $K_j \subset H_j$, $F(X)$ is the free group on $X\subset H$, and $h_j \in H$. Since $G_i$ is indecomposable, $f(G_i)$ is as well and so there is $j\in J$ such that $f(G_i)=h_jKh_j^{-1}$. 
\end{proof}
Next, we discuss an important commutative diagram that is defined with the help of Lemma 2.1. Assume prime decompositions $W_0\cong M_1\#...\# M_{k_0} \#l_0(S^1 \times S^2)$ and $W_1\cong N_1\#...\# N_{k_1} \#l_1(S^1 \times S^2)$. We write $\pi_1(W_0)=\pi_1(M_1)*...*\pi_1(M_{k_0})*F_{l_0}$ where each factor corresponds to the inclusion of $\pi_1(M_i)$ and $F_{l_0}$ corresponds to the inclusion of $\pi_1(l_0(S^1\times S^2))$. Similarly, we write $\pi_1(W_1)=\pi_1(N_1)*...*\pi_1(N_{k_1})*F_{l_1}$. We define $p_{M_i}:W_0\rightarrow M_i$ to be the map induced by collapsing all other summands. The induced map on fundamental group $(p_{M_i})_*$ is exactly the projection map $p_{\pi_1(M_i)}:\pi_1(M)\rightarrow \pi_1(M_i)$ onto the $\pi_1(M_i)$ factor. We define $p_{N_i}$ identically on $\pi_1(W_1)$. Let $h:\pi_1(W_0)\rightarrow \pi_1(W_1)$ be an injection. By Lemma 2.1, $h(\pi_1(M_i))=gKg^{-1}$ where $K$ is a subgroup of $\pi_1(N_j)$ for some $j$ and $g\in \pi_1(W_1)$ (if $g\in \pi_1(N_j)\subset \pi_1(W_1)$, we assume that $g=e$). Denote by $h_{M_i}:\pi_1(M_i)\rightarrow \pi_1(N_j)$ the injection such that $h|_{\pi_1(M_i)}=gh_{M_i}g^{-1}$. We call $h_{M_i}$ \textit{the} $M_i$\textit{-restriction of} $h$. Given that $M_i$ has index $i$, we simply write $h_i$ instead of $h_{M_i}$. We have the following commutative diagram.
\[\begin{tikzcd}[ampersand replacement=\&]
	{\pi_1(W_0)} \& {\pi_1(W_0)} \\
	{\pi_1(M_i)} \& {\pi_1(N_j)}
	\arrow["h", from=1-1, to=1-2]
	\arrow["{(p_{M_i})_*}"', from=1-1, to=2-1]
	\arrow["{(p_{N_j})_*}", from=1-2, to=2-2]
	\arrow["{h_i}", from=2-1, to=2-2]
\end{tikzcd}\]
Suppose that $h=\alpha=(\iota_1)_*^{-1}\circ (\iota_0)_*$, as in Section 1, and fix $i\in \{1,...,k_0\}$. Suppose that $(\iota_1)_*$ is an isomorphism on $\pi_1$. We have an injective map $\alpha_i:\pi_1(M_i)\rightarrow \pi_1(N_j)$ given by the $M_i$-restriction of $\alpha$. Given a topological space $L$, we let $j_{L}:L\rightarrow K(\pi_1(L),1)$ be the inclusion of $L$ into its $K(\pi,1)$ space. We have the following diagram, with arrows defined up to homotopy.
\[\begin{tikzcd}
	& {W_0} & W & {W_1} & \\
	& {K(\pi_1(W_0),1)} & {K(\pi_1(W),1)} & {K(\pi_1(W_1),1)} \\
	{M_i} & {K(\pi_1(M_i),1)} && {K(\pi_1(N_{j}),1)} & {N_{j}}
	\arrow["{\iota_0}", from=1-2, to=1-3]
	\arrow["{j_{W_0}}"', from=1-2, to=2-2]
	\arrow["{p_{M_i}}"', shift right=2, from=1-2, to=3-1]
	\arrow["{j_W}"', from=1-3, to=2-3]
	\arrow["{\iota_1}"', from=1-4, to=1-3]
	\arrow["{j_{W_1}}", from=1-4, to=2-4]
	\arrow["{p_{N_{j}}}", shift left=2, from=1-4, to=3-5]
	\arrow["{\iota_0}", from=2-2, to=2-3]
	\arrow["{q_{M_i}}"', from=2-2, to=3-2]
	\arrow["{\iota_1^{-1}}"', shift right, from=2-3, to=2-4]
	\arrow["{\iota_1}"', shift right, from=2-4, to=2-3]
	\arrow["{q_{N_{j}}}", from=2-4, to=3-4]
	\arrow["{j_{M_i}}", from=3-1, to=3-2]
	\arrow["f", from=3-2, to=3-4]
	\arrow["{j_{N_{j}}}"', from=3-5, to=3-4]
\end{tikzcd}\]
We refer to this diagram by $\text{D}*$. In D$*$, $q_{M_i}$ is defined uniquely up to homotopy so that $(q_{M_i})_*=(j_{M_i})_*\circ (p_{M_i})_*\circ (j_{W_0})_*^{-1}$. Similarly, $\iota_0:K(\pi_1(W_0),1)\rightarrow K(\pi_1(W),1)$ is defined so that $(\iota_0)_*=(j_W)_*\circ (\iota_0)_* \circ (j_{W_0})_*^{-1}$. The maps $q_{N_j}$ and $\iota_1:K(\pi_1(W_1),1)\rightarrow K(\pi_1(W),1)$ are defined analogously. The map $f$ is defined so that $f_*=(j_{N_j})_*\circ \alpha_i\circ (j_{M_i})_*^{-1}$. The top two rows and the left and right triangles commute up to homotopy, since they are constructed to commute on $\pi_1$, which completely determines the homotopy class of a map into a $K(\pi,1)$ space (see \cite{Hatcher2002AT}, for instance). The single square diagram defined above implies commutativity of the bottom row of D$*$. Indeed, the single square diagram gives that $(j_{N_j})_*^{-1}\circ f_*\circ (q_{M_i})_*\circ (j_{W_0})_*=(j_{N_j})_*^{-1}\circ (q_{N_j})_*\circ (j_{W_1})_* \circ (\iota_1)_*^{-1}\circ (\iota_0)_*$. Using commutativity of the top two rows and canceling $(j_{N_j})_*^{-1}$ on the left hand side, the previous equality becomes $f_*\circ (q_{M_i})_*\circ (j_{W_0})_*=(q_{N_j})_*\circ (\iota_1^{-1})_* \circ (j_W)_*\circ (j_W)^{-1}_* \circ (\iota_0)_*\circ (j_{W_0})_*$, which simplifies further to $f_*\circ (q_{M_i})_*=(q_{N_j})_*\circ (\iota_1^{-1})_* \circ (\iota_0)_*$. Thus, D$*$ commutes. We use this diagram in the following proof.
\begin{proposition}
    Suppose that $W_0 \leq W_1$ by a rational homology ribbon cobordism $W$. Assume prime decompositions $W_0\cong M_1\#...\# M_{k_0} \#l_0(S^1 \times S^2)$ and $W_1\cong N_1\#...\# N_{k_1} \#l_1(S^1 \times S^2)$. Assume that the inclusion $\iota_1$ induces an isomorphism on $\pi_1$. Then for each $1\leq i \leq k_0$ such that $M_i$ is aspherical there is $1 \leq j\leq k_1$ such that $\alpha(\pi_1(M_i))=g\pi_1(N_j)g^{-1}$ for some $g\in \pi_1(W_1)$. Furthermore, $M_i$ is oriented homeomorphic to $N_j$ by a map $h:M_i\rightarrow N_j$ such that $h_*=\alpha_i$.
\end{proposition}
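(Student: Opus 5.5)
The plan is a degree argument run through the diagram D$*$. First I will show that the target summand $N_j$ is itself aspherical, so that $K(\pi_1(N_j),1)$ may be taken to be $N_j$. Then I will show that the map $f$ in the bottom row of D$*$ has degree one. From degree one I will get that $\alpha_i$ is an isomorphism, and finally I will upgrade the resulting homotopy equivalence $f$ to a homeomorphism.

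\emph{Step 1: $N_j$ is aspherical.} Since $M_i$ is a closed aspherical 3-manifold, $\pi_1(M_i)$ is infinite and torsion free. The $M_i$-restriction $\alpha_i:\pi_1(M_i)\to\pi_1(N_j)$ is injective, so $\pi_1(N_j)$ is infinite. Now $N_j$ is a closed, orientable, prime summand different from $S^1\times S^2$ (those summands are absorbed into the free factor $F_{l_1}$ of the Grushko decomposition). A closed orientable prime 3-manifold other than $S^1\times S^2$ with infinite $\pi_1$ is irreducible with infinite $\pi_1$, hence aspherical by the sphere theorem. So $j_{N_j}$ is a homotopy equivalence, and likewise $j_{M_i}$. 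In D$*$ we may therefore view $f$ as a map $M_i\to N_j$ with $f_*=\alpha_i$, up to the conjugation by $g$ and the usual basepoint bookkeeping.

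\emph{Step 2: $\deg f=1$.} Since $\partial W=-W_0\sqcup W_1$, we have $(\iota_0)_*[W_0]=(\iota_1)_*[W_1]$ in $H_3(W;\mathbb{Q})$. Push this equality to $K(\pi_1(W),1)$ by $j_W$, then apply $(q_{N_j})_*\circ(\iota_1^{-1})_*$, using the homotopy commutativity of D$*$.
\begin{itemize}
\item Along the right-hand side, the result is $(q_{N_j})_*(j_{W_1})_*[W_1]=(j_{N_j})_*(p_{N_j})_*[W_1]=(j_{N_j})_*[N_j]$. This uses the fact that collapsing the other summands of a connected sum sends the fundamental class to the fundamental class.
\item Along the left-hand side, the commutative bottom row gives $f_*(q_{M_i})_*(j_{W_0})_*[W_0]=f_*(j_{M_i})_*[M_i]$.
\end{itemize}
Hence $f_*[M_i]=[N_j]$ in $H_3(N_j;\mathbb{Q})$. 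Both classes are integral, so $f$ has degree exactly $1$.

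\emph{Step 3: $\alpha_i$ is an isomorphism.} A degree-one map between closed oriented manifolds is surjective on $\pi_1$, since otherwise it would lift to a finite cover of degree greater than one, or to an infinite cover with vanishing $H_3$. So $\alpha_i$ is surjective as well as injective, that is, an isomorphism. It follows that $\alpha(\pi_1(M_i))=g\,\alpha_i(\pi_1(M_i))\,g^{-1}=g\pi_1(N_j)g^{-1}$.

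\emph{Step 4: from homotopy equivalence to homeomorphism.} Now $f:M_i\to N_j$ is a $\pi_1$-isomorphism between aspherical manifolds, hence a homotopy equivalence of degree one. By the Borel conjecture for closed aspherical 3-manifolds, $f$ is homotopic to a homeomorphism $h$. This is Waldhausen's theorem in the Haken case; in general it follows from geometrization together with Mostow rigidity and the results of Scott and others for Seifert fibered manifolds. Since $h\simeq f$, we have $h_*=\alpha_i$, and $h$ is orientation preserving because $\deg h=\deg f=1$.

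I expect the main obstacle to be Step 2. It requires carefully checking that all the homotopy-commutativity claims in D$*$ are compatible with fundamental classes. The key facts are that $p_{M_i}$ and $p_{N_j}$ have degree one, and that the conjugation by $g$ and the basepoint choices do not affect the computation; they do not, because free homotopy suffices on $H_3$. A secondary point is justifying the Borel-type rigidity statement in Step 4 in full generality. I will cite it as a known consequence of geometrization rather than reprove it.
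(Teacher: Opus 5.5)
Your route is the paper's route, step for step. You prove asphericity of $N_j$ (the paper only asserts it; your justification is correct). You push $(\iota_0)_*[W_0]=(\iota_1)_*[W_1]$ through D$*$ to get $\deg f=1$, deduce surjectivity on $\pi_1$ from degree one, and finish with rigidity of closed aspherical 3-manifolds (the paper cites Theorem 2.4(2) of Friedl--Misev--Zentner where you cite Borel/geometrization).

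The gap is the appeal in Step 2 to ``the commutative bottom row''. Conjugation by $g$ is harmless, as you say. What fails is that $q_{N_j}\circ\alpha\simeq f\circ q_{M_i}$ need not hold at all. Let $S_j$ be the set of indices $i'$ with $\alpha(\pi_1(M_{i'}))$ conjugate into $\pi_1(N_j)$. For $i'\in S_j\setminus\{i\}$ and $1\neq y\in\pi_1(M_{i'})$, $p_{N_j}(\alpha(y))$ is a conjugate of $\alpha_{i'}(y)\neq 1$, whereas $\alpha_i(p_{M_i}(y))=1$. On $H_3(K(\pi_1(W_0),1))=\bigoplus_{i'}H_3(K(\pi_1(M_{i'}),1))$ one gets $(q_{N_j}\circ\alpha)_*=\sum_{i'\in S_j}(\alpha_{i'})_*\circ\mathrm{pr}_{i'}$, not $(\alpha_i)_*\circ\mathrm{pr}_i$. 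So your computation proves only
\[\sum_{i'\in S_j}\deg(\alpha_{i'})=1.\]
Each $M_{i'}$ with $i'\in S_j$ is aspherical, since a nontrivial finite group cannot embed in the torsion-free group $\pi_1(N_j)$. Moreover $\alpha_{i'}$ has image of finite index $d_{i'}$, because an infinite-index subgroup of a $PD^3$ group has cohomological dimension at most $2$. Hence $\deg(\alpha_{i'})=\pm d_{i'}$. Nothing you use excludes, say, a double cover of $N_j$ and a copy of $\overline{N_j}$ both mapping into conjugates of $\pi_1(N_j)$, with degrees $2$ and $-1$. In such a configuration the conclusion would fail for both summands, so this case must be ruled out, not computed away.

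To close the gap you must show $S_j=\{i\}$, and this needs a real argument.
\begin{itemize}
\item Injectivity of $\alpha$ gives only part of it. Two members of $S_j$ cannot land in the same conjugate of $\pi_1(N_j)$: they would generate a finite-index, hence freely indecomposable, subgroup isomorphic to a nontrivial free product. But they can land in different conjugates, as $\pi_1(N)$ and $t\pi_1(N)t^{-1}$ generate their free product in $\pi_1(N)*\langle t\rangle$.
\item The rational homology hypothesis settles the case $b_1(N_j)>0$. For $i'\in S_j$, each map $H_1(M_{i'};\mathbb{Q})\to H_1(N_j;\mathbb{Q})$ is onto by the transfer, while their sum must be injective because $\alpha^{ab}\otimes\mathbb{Q}$ is an isomorphism. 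So $|S_j|\geq 2$ forces $b_1(N_j)=0$.
\item For rational homology sphere summands, further input is required.
\end{itemize}
The paper's own proof rests on the same unproved commutativity (the square $(p_{N_j})_*\circ\alpha=\alpha_i\circ(p_{M_i})_*$ asserted before D$*$), so you have reproduced it faithfully, weak point included. Where the paper later invokes the proposition after a bijection of summands is known (Lemma 4.4, Theorem 4.7), $S_j$ is a singleton and the degree argument is valid.
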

\begin{proof}
Let $1\leq i\leq k_0$ be such that $M_i$ is aspherical. By Lemma 2.1, there is $1\leq j\leq k_1$ such that $\alpha(\pi_1(M_i))=gKg^{-1}$ for some subgroup $K \subset \pi_1(N_j)$ and $g \in \pi_1(W_1)$. Let $\alpha_i$ be the $M_i$-restriction of $\alpha$. We wish to show that $K=\pi_1(N_j)$. $N_j$ must be aspherical and so we may choose a map $f:M_i \rightarrow N_j$ such that $f_{*}$ is exactly $\alpha_i$. Since $M_i$ and $N_j$ are aspherical, the diagram D$*$ becomes
\[\begin{tikzcd}
	{W_0} & W & {W_1} \\
	{K(\pi_1(W_0),1)} & {K(\pi_1(W),1)} & {K(\pi_1(W_1),1)} \\
	{M_i} && {N_j}
	\arrow["{\iota_0}", from=1-1, to=1-2]
	\arrow["{j_{W_0}}"', from=1-1, to=2-1]
	\arrow["{p_{M_i}}"', shift right, curve={height=30pt}, from=1-1, to=3-1]
	\arrow["{j_W}"', from=1-2, to=2-2]
	\arrow["{\iota_1}"', from=1-3, to=1-2]
	\arrow["{j_{W_1}}", from=1-3, to=2-3]
	\arrow["{p_{N_j}}", shift left, curve={height=-30pt}, from=1-3, to=3-3]
	\arrow["{\iota_0}", from=2-1, to=2-2]
	\arrow["{q_{M_i}}"', from=2-1, to=3-1]
	\arrow["{\iota_1^{-1}}"', shift right, from=2-2, to=2-3]
	\arrow["{\iota_1}"', shift right, from=2-3, to=2-2]
	\arrow["{q_{N_j}}", from=2-3, to=3-3]
	\arrow["f", from=3-1, to=3-3]
\end{tikzcd}\]
By Proposition 2.2 of \cite{FriedlMisevZentner2025RibbonRH}, $(\iota_0)_*([W_0])=(\iota_1)_*([W_1])$. Since $(p_{M_i})_*([W_0])=[M_i]$ and $(p_{N_j})_*([W_1])=[N_j]$, the diagram above shows that $f_*([M_i])=[N_j]$. A degree $1$ map induces a surjection on $\pi_1$, and so the map $f$ induces an isomorphism on $\pi_1$. By Theorem 2.4 (2) of \cite{FriedlMisevZentner2025RibbonRH}, there is a homeomorphism $h:M_i \rightarrow N_j$ such that $h_*=f_*=\alpha_i$. Finally, since $N_j$ is aspherical, we have $h\simeq f$, implying that $h_*([M_i])=[N_j]$. Therefore, $h$ is an orientation preserving homeomorphism.
\end{proof}
Finally, we prove one last lemma, which will be used throughout the text. We note that an unspecified homology group refers to integer homology.
\begin{lemma}
    Suppose that $W$ is a rational homology ribbon cobordism where $W_0 \leq W_1$. Then $H_2(W,W_0)=0$. In particular, the inclusion induced map $H_1(W_0)\rightarrow H_1(W)$ is injective.
\end{lemma}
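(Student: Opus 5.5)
The plan is to read $H_2(W,W_0)$ off the relative handle chain complex and then use the rational homology hypothesis to kill its torsion. By definition, $W$ is obtained from $W_0\times[0,1]$ by attaching $1$-handles and $2$-handles. The pair $(W,W_0)$ is therefore homotopy equivalent to a relative CW pair with cells only in dimensions $1$ and $2$. Its relative cellular chain complex is
\[
0\rightarrow C_2=\mathbb{Z}^{b}\xrightarrow{\ \partial\ } C_1=\mathbb{Z}^{a}\rightarrow 0,
\]
where $a$ is the number of $1$-handles and $b$ the number of $2$-handles. Thus $H_2(W,W_0)=\ker\partial$, which is a subgroup of the free abelian group $\mathbb{Z}^b$, so it is torsion-free. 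It therefore suffices to show that $H_2(W,W_0;\mathbb{Q})=0$, since a torsion-free group with vanishing rationalization is zero.

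For the rational statement, I will use the long exact sequence of the pair with rational coefficients:
\[
H_2(W_0;\mathbb{Q})\xrightarrow{\cong} H_2(W;\mathbb{Q})\rightarrow H_2(W,W_0;\mathbb{Q})\rightarrow H_1(W_0;\mathbb{Q})\xrightarrow{\cong} H_1(W;\mathbb{Q}).
\]
The two outer maps are isomorphisms by the hypothesis that $\iota_0$ induces isomorphisms on rational homology. Exactness then gives $H_2(W,W_0;\mathbb{Q})=0$: surjectivity on $H_2$ makes the map $H_2(W;\mathbb{Q})\to H_2(W,W_0;\mathbb{Q})$ zero, and injectivity on $H_1$ makes the connecting map zero. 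By universal coefficients (rational coefficients are flat), $H_2(W,W_0;\mathbb{Q})=H_2(W,W_0)\otimes\mathbb{Q}$. Combined with the torsion-freeness from the first step, this yields $H_2(W,W_0)=0$. Equivalently, $\partial\otimes\mathbb{Q}$ is injective, so $\partial$ is injective over $\mathbb{Z}$.

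The "in particular" follows from the integral long exact sequence of the pair:
\[
H_2(W,W_0)\rightarrow H_1(W_0)\rightarrow H_1(W).
\]
Since the first term vanishes, the map $H_1(W_0)\to H_1(W)$ is injective.

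The only point needing care is the identification of $H_*(W,W_0)$ with the homology of the handle chain complex, including the absence of $3$-cells. This is standard: $W$ deformation retracts onto $W_0\times\{1\}$ union the cores of the handles, so excision and cellular homology apply. Beyond this, I expect no real obstacle.
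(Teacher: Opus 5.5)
Your proof is correct, but it takes a different route from the paper's.

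The paper never looks at the handle chain complex relative to $W_0$. Instead it:
\begin{itemize}
\item applies Poincar\'e--Lefschetz duality to get $H_2(W,W_0)\cong H^2(W,W_1)$;
\item invokes the cited fact $H_1(W,W_1)=0$ (Proposition 2.1 of Daemi--Lidman--Vela-Vick--Wong, which reflects that relative to $W_1$ the cobordism has only $2$- and $3$-handles) to remove the $\text{Ext}$ term in universal coefficients;
\item concludes $H^2(W,W_1)\cong \text{Hom}(H_2(W,W_1),\mathbb{Z})=0$, because rational acyclicity forces $H_2(W,W_1)$ to be torsion.
\end{itemize}

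Your argument is essentially the dual of this. The paper uses the absence of $1$-handles on the $W_1$ side to kill the $\text{Ext}$ term. You use the absence of $3$-handles on the $W_0$ side to see directly that $H_2(W,W_0)=\ker\partial_2\subset\mathbb{Z}^b$ is free. Your long-exact-sequence derivation of $H_2(W,W_0;\mathbb{Q})=0$ is then correct and finishes the proof.

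What your version buys: it is self-contained, needs neither duality nor the external citation, and shows that the only integral input is $C_3=0$. What the paper's version buys: it stays inside its citation-based framework and reuses a proposition it needs elsewhere anyway, namely for the $\pi_1$-injectivity and surjectivity of $\iota_0$ and $\iota_1$.
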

\begin{proof}
By Poincaré-Lefschetz, $H_2(W,W_0)\cong H^2(W,W_1)$. By Proposition 2.1 of \cite{DaemiLidmanVelaVickWong2022RibbonHomologyCobordisms}, $H_1(W,W_1)=0$. Hence, by the universal coefficient theorem for cohomology, we have that $H^2(W,W_1)\cong \text{Hom}(H_2(W,W_1),\mathbb{Z})$. By the universal coefficient theorem for homology, we have that $H_2(W,W_1;\mathbb{Q})\cong H_2(W,W_1)\otimes \mathbb{Q}$. Since $H_2(W,W_1;\mathbb{Q})=0$, we must have that $H_2(W,W_1)$ is pure torsion. Hence, $\text{Hom}(H_2(W,W_1),\mathbb{Z})=0$. We conclude that $H_2(W,W_0)=0$.
\end{proof}

\section{Reduction to the Case Without $S^1 \times S^2$ Summands}
In this section, we show that it suffices to prove Theorem 1.2 in the case that $W_0$ and $W_1$ have no $S^1 \times S^2$ summands in their prime decomposition. We start with definitions and conventions unique to this section.

Given a 3-manifold $M$, we denote by $M'$ a choice of 3-manifold with prime decomposition the irreducible summands of $M$. That is, if $M\cong M_1 \# ...\# M_k\# l(S^1 \times S^2)$ where each $M_i$ is irreducible, then $M'\cong M_1 \#...\#M_k$. The manifold $M$ therefore has the identification $M\cong M'\# F$ where $F\cong l(S^1 \times S^2)$. The prime decomposition of $M$ gives a decomposition of $\pi_1(M)$, given by $\pi_1(M)=G_1*...*G_k*F_l$ where $G_i \cong \pi_1(M_i)$ and $F_l \cong \pi_1(l(S^1 \times S^2))$. This provides a Grushko decomposition for $\pi_1(M)$. Such a decomposition is not unique, though Lemma 2.1 provides a stability result when applied to the identity map $\pi_1(M)\rightarrow \pi_1(M)$: given two decompositions $\pi_1(M)=G_1*...*G_k*F_{l,1}=H_1*...*H_k*F_{l,2}$, we must have that $G_i=gH_{\sigma(i)}g^{-1}$ for some $g\in \pi_1(M)$ and $\sigma\in S_k$. Hence, given any identification $M\cong M'\#F$, the inclusion $\pi_1(M') \rightarrow \pi_1(M)$ provides a canonical subspace of $H_1(M)$ given by the image of the inclusion $H_1(M')\rightarrow H_1(M)$. We denote this subspace of $H_1(M)$ by $H_1^{irr}(M)$. In rational homology, we write $H_1^{irr}(M;\mathbb{Q})$. We have the following result in the context of rational homology ribbon cobordisms between manifolds with the same number of $S^1 \times S^2$ summands.
\begin{lemma}
Let $W$ be a rational homology ribbon cobordism where $W_0 \leq W_1$. Assume that $\iota_1$ induces an isomorphism on $\pi_1$ and that $W_0$ and $W_1$ have the same number of $S^1 \times S^2$ summands in their prime decomposition. Then $(\iota_0)_*(H_1^{irr}(W_0;\mathbb{Q}))=(\iota_1)_*(H_1^{irr}(W_1;\mathbb{Q}))$.
\end{lemma}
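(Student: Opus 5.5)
The plan is to show one inclusion directly and get equality from a dimension count. Write $W_0\cong W_0'\# F_0$ and $W_1\cong W_1'\# F_1$ with $F_0\cong F_1\cong l(S^1\times S^2)$, where $l$ is the common number of $S^1\times S^2$ summands. The corresponding Grushko decompositions are
\[
\pi_1(W_0)=\pi_1(M_1)*\dots*\pi_1(M_{k_0})*F_l,\qquad \pi_1(W_1)=\pi_1(N_1)*\dots*\pi_1(N_{k_1})*F_l .
\]
Each $\pi_1(M_i)$ is nontrivial, freely indecomposable and not $\mathbb{Z}$, since the $M_i$ are irreducible and not $S^1\times S^2$. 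The same holds for each $\pi_1(N_j)$. Hence these really are Grushko decompositions and Lemma 2.1 applies.

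\textbf{Step 1 (inclusion on homology).} Since $(\iota_1)_*$ is an isomorphism on $\pi_1$, the map $\alpha=(\iota_1)_*^{-1}\circ(\iota_0)_*$ is an injection $\pi_1(W_0)\to\pi_1(W_1)$.
\begin{itemize}
\item By Lemma 2.1, for each $i$ we have $\alpha(\pi_1(M_i))=gKg^{-1}$ with $K\subset \pi_1(N_j)$ for some $j$.
\item Pass to abelianizations, using Hurewicz and the identification of $H_1^{irr}$ with the image of the irreducible factors. Conjugation becomes trivial, so the image of $\pi_1(M_i)$ in $H_1(W_1)$ lies in the image of $H_1(N_j)$.
\item Hence $\alpha_*(H_1^{irr}(W_0))\subset H_1^{irr}(W_1)$, and the same holds after tensoring with $\mathbb{Q}$.
\item Abelianizing the identity $(\iota_1)_*\circ\alpha=(\iota_0)_*$ gives the corresponding identity on $H_1$.
\end{itemize}
Therefore
\[
(\iota_0)_*(H_1^{irr}(W_0;\mathbb{Q}))=(\iota_1)_*\alpha_*(H_1^{irr}(W_0;\mathbb{Q}))\subset(\iota_1)_*(H_1^{irr}(W_1;\mathbb{Q})).
\]
The independence of $H_1^{irr}$ from the chosen identification, noted before the lemma, ensures the factors used here give the canonical subspace.

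\textbf{Step 2 (dimension count).}
\begin{itemize}
\item The identification $W_m\cong W_m'\#F_m$ gives $H_1(W_m;\mathbb{Q})=H_1^{irr}(W_m;\mathbb{Q})\oplus\mathbb{Q}^l$. Here the free factor contributes exactly $\mathbb{Q}^l$, and the inclusion of $H_1(W_m')$ is injective because connected sum splits $H_1$ as a direct sum.
\item So $\dim H_1^{irr}(W_m;\mathbb{Q})=b_1(W_m)-l$ for $m=0,1$.
\item Since $W$ is a rational homology cobordism, $b_1(W_0)=b_1(W)=b_1(W_1)$, and both $(\iota_0)_*$ and $(\iota_1)_*$ are isomorphisms on $H_1(\,\cdot\,;\mathbb{Q})$.
\item Hence both sides of the inclusion in Step 1 are subspaces of $H_1(W;\mathbb{Q})$ of the same finite dimension $b_1(W)-l$, so the inclusion is an equality.
\end{itemize}
This is the only place the hypothesis of equal numbers of $S^1\times S^2$ summands is used.

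The main point needing care is Step 1. One must check that the conjugating elements and the possibly proper subgroups $K\subset\pi_1(N_j)$ from Lemma 2.1 cause no trouble. Conjugation vanishes in homology, and containment of $K$ in $\pi_1(N_j)$ is all that is needed, so only an inclusion is obtained at that stage. Equality is then supplied by the dimension count rather than by any surjectivity of the restrictions $\alpha_i$.
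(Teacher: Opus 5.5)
Your proposal is correct and follows the paper's route. Lemma 2.1, applied after abelianizing, gives $(\iota_1)_*^{-1}\circ(\iota_0)_*\bigl(H_1^{irr}(W_0;\mathbb{Q})\bigr)\subset H_1^{irr}(W_1;\mathbb{Q})$, and equality then follows from a rank count using the equal number of $S^1\times S^2$ summands together with $b_1(W_0)=b_1(W_1)$ (you make explicit some details the paper leaves implicit, such as conjugation dying in homology and the factors not being $\mathbb{Z}$).
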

\begin{proof}
By Lemma 2.1, the induced map $\overline{\alpha}=(\iota_1)^{-1}_*\circ(\iota_0)_*:H_1(W_0;\mathbb{Q})\rightarrow H_1(W_1;\mathbb{Q})$ satisfies $\overline{\alpha}(H_1^{irr}(W_0;\mathbb{Q}))\subset H_1^{irr}(W_1;\mathbb{Q})$. Since $W_0$ and $W_1$ have the same number of $S^1 \times S^2$ summands, by definition of $H_1^{irr}$, we must have that $rk(H_1^{irr}(W_0;\mathbb{Q}))=rk(H_1^{irr}(W_1;\mathbb{Q}))$, and so $\overline{\alpha}(H_1^{irr}(W_0;\mathbb{Q}))= H_1^{irr}(W_1;\mathbb{Q})$. We conclude that $(\iota_0)_*(H_1^{irr}(W_0;\mathbb{Q}))=(\iota_1)_*(H_1^{irr}(W_1;\mathbb{Q}))$
\end{proof}
\begin{remark}
Suppose that $W$ is a rational homology ribbon cobordism where $W_0\leq W_1$. By Theorem 1.5 of \cite{FriedlMisevZentner2025RibbonRH}, $\iota_1$ automatically induces an isomorphism on $\pi_1$ if $W_0\cong W_1$. Furthermore, if $W_0\cong W_1$, then, clearly, $W_0$ and $W_1$ have the same number of $S^1 \times S^2$ summands in their prime decompositions. In particular, the assumptions of Lemma 3.1 hold if $W_0 \cong W_1$.
\end{remark}
We use the case of Remark 3.2 applied to Lemma 3.1 to prove the following.
\begin{proposition}
Suppose that $W_0 \leq W_1$ and $W_1 \leq W_0$. Then $W_0$ and $W_1$ have the same number of $S^1 \times S^2$ summands in their prime decompositions.
\end{proposition}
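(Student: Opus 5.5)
The plan is to compare the dimensions of the ``irreducible parts'' $H_1^{irr}(W_i;\mathbb{Q})$ on the two sides, using the same mechanism as in the first line of the proof of Lemma 3.1. The key point is that this step of Lemma 3.1 does not use the hypothesis on the number of $S^1\times S^2$ summands. Let $W$ realize $W_0\leq W_1$ and $V$ realize $W_1\leq W_0$. Write $W_0\cong W_0'\# l_0(S^1\times S^2)$ and $W_1\cong W_1'\# l_1(S^1\times S^2)$. Since the free factor contributes $\mathbb{Q}^{l}$ to the abelianization, we get
$$H_1(W_i;\mathbb{Q})=H_1^{irr}(W_i;\mathbb{Q})\oplus \mathbb{Q}^{l_i}, \qquad l_i=b_1(W_i)-\operatorname{rk} H_1^{irr}(W_i;\mathbb{Q}).$$
Moreover $b_1(W_0)=b_1(W_1)$, because $W$ is a rational homology cobordism. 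So it suffices to show $\operatorname{rk}H_1^{irr}(W_0;\mathbb{Q})=\operatorname{rk}H_1^{irr}(W_1;\mathbb{Q})$.

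\textbf{Step 1: $\alpha$ is an injection.} Since both $W_0\leq W_1$ and $W_1\leq W_0$ hold, Corollary 1.3 of \cite{FriedlMisevZentner2025RibbonRH} shows that $(\iota_1)_*$ is an isomorphism on $\pi_1$ for $W$. By Proposition 2.1 of \cite{DaemiLidmanVelaVickWong2022RibbonHomologyCobordisms}, $(\iota_0)_*$ is injective on $\pi_1$. Hence $\alpha=(\iota_1)_*^{-1}\circ(\iota_0)_*:\pi_1(W_0)\to\pi_1(W_1)$ is an injective homomorphism.

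\textbf{Step 2: $\overline{\alpha}$ preserves the irreducible parts.} Apply Lemma 2.1 to $\alpha$ with respect to the Grushko decompositions coming from the prime decompositions. Each irreducible factor $\pi_1(M_i)$ is sent into a conjugate $g\,K\,g^{-1}$ with $K\subset\pi_1(N_j)$. Conjugation becomes trivial after abelianizing, so the induced map $\overline{\alpha}=(\iota_1)_*^{-1}\circ(\iota_0)_*$ on $H_1(-;\mathbb{Q})$ satisfies
$$\overline{\alpha}\bigl(H_1^{irr}(W_0;\mathbb{Q})\bigr)\subset H_1^{irr}(W_1;\mathbb{Q}).$$
Since $W$ is a rational homology cobordism, $\overline{\alpha}$ is an isomorphism on rational $H_1$. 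In particular it is injective, which gives $\operatorname{rk}H_1^{irr}(W_0;\mathbb{Q})\leq \operatorname{rk}H_1^{irr}(W_1;\mathbb{Q})$.

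\textbf{Step 3: symmetry.} Running the same argument with $V$ (again using Corollary 1.3 of \cite{FriedlMisevZentner2025RibbonRH}, now for $V$) gives the reverse inequality. Equality of ranks then yields $l_0=l_1$. Alternatively, in the spirit of Remark 3.2, one could use the composite $WV$, a rational homology ribbon self-cobordism of $W_0$, but the direct rank comparison above seems cleanest.

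The main obstacle is the bookkeeping in Step 2: checking that Lemma 2.1 genuinely applies to every irreducible factor. Each $\pi_1(M_i)$ must be a nontrivial, freely indecomposable, non-cyclic-free factor of the Grushko decomposition; this is where the prime decomposition (no fake $S^3$ summands, by geometrization) and the Kneser conjecture are used. One must also check that the splitting of rational $H_1$ into the irreducible and free parts is a genuine direct sum, independent of the chosen identification. The latter follows from the well-definedness of $H_1^{irr}$ discussed just before Lemma 3.1.
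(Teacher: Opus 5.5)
Your argument is correct, and it takes a more direct route than the paper's proof.

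The paper glues $W$ and $V$ into the self-cobordism $WV$ of $W_0$. It then applies Lemma 3.1 via Remark 3.2 to conclude that $\overline{\beta}\circ\overline{\alpha}$ carries $H_1^{irr}(W_0;\mathbb{Q})$ onto itself. That step needs Theorem 1.5 of Friedl--Misev--Zentner, to know that the outgoing inclusion of $WV$ is a $\pi_1$-isomorphism. The paper then squeezes,
$H_1^{irr}(W_0;\mathbb{Q})=\overline{\beta}\,\overline{\alpha}(H_1^{irr}(W_0;\mathbb{Q}))\subset\overline{\beta}(H_1^{irr}(W_1;\mathbb{Q}))\subset H_1^{irr}(W_0;\mathbb{Q})$,
to get that $\overline{\beta}$ restricts to an isomorphism of irreducible parts, and reads off the equality of the free ranks.

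You use only the two inclusions $\overline{\alpha}(H_1^{irr}(W_0;\mathbb{Q}))\subset H_1^{irr}(W_1;\mathbb{Q})$ and $\overline{\beta}(H_1^{irr}(W_1;\mathbb{Q}))\subset H_1^{irr}(W_0;\mathbb{Q})$. The paper derives these too, from Lemma 2.1. Since each map is injective on rational $H_1$, because each cobordism is a rational homology cobordism, these already give opposite rank inequalities. Combined with $b_1(W_0)=b_1(W_1)$, this yields $l_0=l_1$.

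Your version therefore dispenses with the composite cobordism, Lemma 3.1, and the extra input behind Remark 3.2, and it loses nothing. The paper's apparently stronger conclusion, that $\overline{\beta}$ is an isomorphism $H_1^{irr}(W_1;\mathbb{Q})\to H_1^{irr}(W_0;\mathbb{Q})$, follows at once from your rank equality together with injectivity. Both routes need the same $\pi_1$-isomorphism input for $\iota_1$ and $\epsilon_1$, and the same well-definedness of $H_1^{irr}$, which you correctly flag.
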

\begin{proof}
Let $W$ be a rational homology ribbon cobordism such that $W_0 \leq W_1$ and let $V$ be a rational homology ribbon cobordism such that $W_1 \leq W_0$. We obtain a rational homology ribbon cobordism $WV$ from $W_0$ to $W_0$ by gluing both cobordisms along the $W_1$ boundary component. We have the following commutative diagram.
\[\begin{tikzcd}[ampersand replacement=\&]
	\&\& WV \&\& \\
	\\
	{W_0} \& W \& {W_1} \& V \& {W_0}
	\arrow["{\delta_{0}}"{pos=0.4}, from=3-1, to=1-3]
	\arrow["{\iota_0}"', from=3-1, to=3-2]
	\arrow["{\delta_W}"{pos=0.2}, from=3-2, to=1-3]
	\arrow["{\delta_{W_1}}"'{pos=0.2}, from=3-3, to=1-3]
	\arrow["{\iota_1}", from=3-3, to=3-2]
	\arrow["{\epsilon_0}"', from=3-3, to=3-4]
	\arrow["{\delta_V}"'{pos=0.2}, from=3-4, to=1-3]
	\arrow["{\delta_1}"'{pos=0.4}, from=3-5, to=1-3]
	\arrow["{\epsilon_1}", from=3-5, to=3-4]
\end{tikzcd}\]
where every map is an inclusion. By Corollary 1.6 of \cite{FriedlMisevZentner2025RibbonRH} and Proposition 1.2 of \cite{DaemiLidmanVelaVickWong2022RibbonHomologyCobordisms}, we may define the injective map $\alpha:\pi_1(W_0)\rightarrow \pi_1(W_1)$ given by $(\iota_1)_*^{-1}\circ(\iota_0)_*$. Similarly, we may define the injection $\beta:\pi_1(W_1)\rightarrow \pi_1(W_0)$ given by $(\epsilon_1)_*^{-1}\circ(\epsilon_0)_*$. By Remark 3.2 applied to Lemma 3.1, $(\delta _1)_*^{-1}\circ (\delta_0)_*$ sends $H_1^{irr}(W_0;\mathbb{Q})$ isomorphically onto $H_1^{irr}(W_0;\mathbb{Q})$. Since $\alpha$ and $\beta$ are injective, Lemma 2.1 implies that $(\iota_1)_*^{-1} \circ (\iota_0)_*(H_1^{irr}(W_0;\mathbb{Q}))\subset H_1^{irr}(W_1;\mathbb{Q})$ and $(\epsilon_1)_*^{-1} \circ (\epsilon_0)_*(H_1^{irr}(W_1;\mathbb{Q}))\subset H_1^{irr}(W_0;\mathbb{Q})$. Since $(\delta _1)_*^{-1}\circ (\delta_0)_*$ sends $H_1^{irr}(W_0;\mathbb{Q})$ isomorphically onto $H_1^{irr}(W_0;\mathbb{Q})$, the diagram shows that $(\epsilon_1)_*^{-1}\circ (\epsilon_0)_*$ sends $H_1^{irr}(W_1;\mathbb{Q})$ surjectively onto $H_1^{irr}(W_0;\mathbb{Q})$. Since $(\epsilon_1)_*^{-1}\circ (\epsilon_0)_*$ is injective on $H_1(W_1;\mathbb{Q})$, we have that $(\epsilon_1)_*^{-1}\circ (\epsilon_0)_*$ maps $H_1^{irr}(W_1;\mathbb{Q})$ isomorphically onto $H_1^{irr}(W_0;\mathbb{Q})$. Since $V$ is a rational homology cobordism, the map $(\epsilon_1)_*^{-1}\circ (\epsilon_0)_*$ is an isomorphism on all of $H_1(W_1;\mathbb{Q})$. In particular, we must have that the rank in first rational homology coming from non-irreducible prime summands in $W_1$ is equal to the rank in first rational homology coming from non-irreducible prime summands in $W_0$. That is, $W_0$ and $W_1$ have the same number of $S^1 \times S^2$ summands.
\end{proof}

Next, we demonstrate how $S^1\times S^2$ summands can be removed, and, in doing so, we provide a new rational homology ribbon cobordism, which we will denote by $W'$. Lemma 3.1 will assist in demonstrating that $W'$ is a rational homology ribbon cobordism between the boundary components $W_0'$ and $W_1'$ of $W'$.

Given a rational homology ribbon cobordism $W$ where $W_0 \leq W_1$ and such that $W_0$ and $W_1$ have the same number of $S^1 \times S^2$ summands, we produce a new ribbon cobordism $W'$. Let $W_0\cong W_0' \# F_0$ be an identification of $W_0$, where $F_0\cong l(S^1 \times S^2)$. Start with $W_0'\times [0,1]$ and attach to $W_0'\times \{1\}$ $l$ 1-handles so that the positive boundary component at this stage is exactly the provided identification $W_0' \# F_0$. Attach to this boundary component the negative boundary component of $W$ via the identification $W_0\cong W_0' \# F_0$. At this stage, the positive boundary component is $W_1$. The construction of $W'$ is completed by attaching $l$ 2-handles to an $S^1$ core of each $S^1 \times S^2$ summand of $W_1$ so that the resulting effect in the $W_1$ boundary component is the dual $0$-Dehn-surgery on the core of the $0$-Dehn-surgery solid torus. In doing so, the positive boundary becomes $W_1'$. We refer to Figure 1 for a demonstration of obtaining $W'$ from $W$.
\begin{figure}[h]
    \centering
    \includegraphics[width=0.8\textwidth]{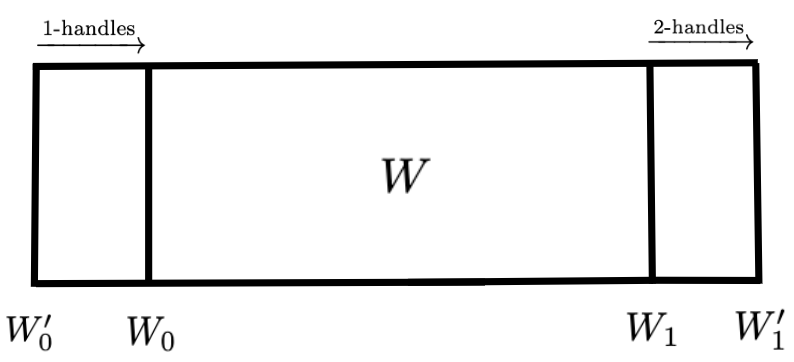}
    \caption{The ribbon cobordism $W'$.}
    \label{fig:ribboncob}
\end{figure}
Observe that $W_1'$ is obtained from $W_1$ by killing $S^1 \times S^2$ summands inside $W_1$. In particular, this gives an isomorphism $\pi_1(W_1)\cong\pi_1(W_1')*F_l$ and we define $p_{\pi_1(W_1')}:\pi_1(W_1)\rightarrow \pi_1(W_1')$ to be the composition $\pi_1(W_1)\xrightarrow{\cong}\pi_1(W_1')*F_l\xrightarrow{proj}\pi_1(W_1')$ where $proj$ denotes projection onto the $\pi_1(W_1')$ factor. We note that the map $p_{\pi_1(W_1')}$ has right inverse $i_{\pi_1(W_1')}$ defined by post-composing the canonical inclusion $\pi_1(W_1')\rightarrow \pi_1(W_1')*F_l$ with the isomorphism $\pi_1(W_1')*F_l\cong \pi_1(W_1)$. We also define $\iota_0'$ and $\iota_1'$ to be the inclusions of $W_0'$ and $W_1'$ into $W'$ and let $\iota_W$ denote the inclusion of $W$ into $W'$. Given our construction, we have the following commutative diagram, and we provide a short proof of commutativity.
\[\begin{tikzcd}[ampersand replacement=\&]
	{\pi_1(W_0')} \& {\pi_1(W')} \& {\pi_1(W_1')} \\
	{\pi_1(W_0)} \& {\pi_1(W)} \& {\pi_1(W_1)}
	\arrow["{(\iota_0')_*}", from=1-1, to=1-2]
	\arrow[from=1-1, to=2-1]
	\arrow["{(\iota_1')_*}"', from=1-3, to=1-2]
	\arrow["{i_{\pi_1(W_1')}}", shift left=3, from=1-3, to=2-3]
	\arrow["{(\iota_0)_*}"', from=2-1, to=2-2]
	\arrow["{(\iota_W)_*}", from=2-2, to=1-2]
	\arrow["{p_{\pi_1(W_1')}}", from=2-3, to=1-3]
	\arrow["{(\iota_1)_*}", from=2-3, to=2-2]
\end{tikzcd}\]
\begin{proof}
Let $P_0$ denote $W_0'\times [0,1]$ with $l$ 1-handles attached as above. Let $P_1$ denote $W_1'\times [0,1]$ with $l$ 2-handles attached to $W_1'\times \{0\}$ along $l$ unknots living in $l$ disjoint 3-balls so that the resulting surgery has the $0/1$ framing. Then $W'$ is constructed by attaching the positive boundary component of $P_0$ to the negative boundary component of $W$, and the negative boundary component of $P_1$ to the positive boundary component of $W$. This makes commutativity of the diagram above obvious by the following commutative diagram.
\[\begin{tikzcd}[ampersand replacement=\&]
	{\pi_1(W_0')} \& {\pi_1(P_0)} \& {\pi_1(W')} \& {\pi_1(P_1)} \& {\pi_1(W_1')} \\
	\& {\pi_1(W_0)} \& {\pi_1(W)} \& {\pi_1(W_1)}
	\arrow[from=1-1, to=1-2]
	\arrow["{i_{\pi_1(W_0')}}"', from=1-1, to=2-2]
	\arrow[from=1-2, to=1-3]
	\arrow[from=1-4, to=1-3]
	\arrow["\cong"', from=1-5, to=1-4]
	\arrow["{i_{\pi_1(W_1')}}", from=1-5, to=2-4]
	\arrow["\cong", from=2-2, to=1-2]
	\arrow[from=2-2, to=2-3]
	\arrow[from=2-3, to=1-3]
	\arrow[from=2-4, to=1-4]
	\arrow[from=2-4, to=2-3]
\end{tikzcd}\]
where all unlabeled arrows are induced by inclusions.
\end{proof}

Recall that, given a rational homology ribbon cobordism $W$ where $W_0 \leq W_1$ such that the inclusion $\iota_1$ of $W_1$ induces an isomorphism on $\pi_1$, we denote by $\alpha:\pi_1(W_0)\rightarrow \pi_1(W_1)$ the composition $(\iota_1)^{-1}_*\circ (\iota_0)_*$. Inspired by our construction above, we define $\alpha'$ as follows:
\[\begin{tikzcd}[ampersand replacement=\&]
	{\pi_1(W_0')} \& {\pi_1(W_0)} \& {\pi_1(W_1)} \& {\pi_1(W_1')}
	\arrow["{i_{\pi_1(W_0')}}", from=1-1, to=1-2]
	\arrow["\alpha", from=1-2, to=1-3]
	\arrow["{p_{\pi_1(W_1')}}", from=1-3, to=1-4]
\end{tikzcd}\]
We have the following result about $W'$.
\begin{proposition}
Let $W_0$ and $W_1$ be closed, connected, oriented 3-manifolds with an equal number of $S^1 \times S^2$ summands. Suppose that $W_0\leq W_1$ by a rational homology ribbon cobordism $W$ such that the inclusion $\iota_1$ induces an isomorphism on $\pi_1$. Then the cobordism $W'$ is a rational homology ribbon cobordism from $W_0'$ to $W_1'$ and $\iota_1'$ induces an isomorphism on $\pi_1$. Furthermore, $\alpha'=(\iota_1')_*^{-1}\circ (\iota_0')_*$.
\end{proposition}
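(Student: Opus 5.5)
Three things need checking: $W'$ is a ribbon cobordism, it is a rational homology cobordism, and $(\iota_1')_*$ is a $\pi_1$-isomorphism with $\alpha'=(\iota_1')_*^{-1}\circ(\iota_0')_*$.

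\textbf{Ribbon.} $W'=P_0\cup W\cup P_1$, where $P_0$ consists of $W_0'\times[0,1]$ with $l$ 1-handles and $W$ is $W_0\times[0,1]$ with 1- and 2-handles. Turning $P_1$ upside down, it is $W_1\times[0,1]$ with $l$ 2-handles attached along the $S^1$ cores. After isotoping attaching regions, all 1-handles of $P_0$ and $W$ can be attached first and all 2-handles of $W$ and $P_1$ afterwards. So $W'$ is built from $W_0'\times[0,1]$ by 1- and 2-handles and is ribbon.

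\textbf{Fundamental group.} Van Kampen applied to the decomposition in the diagram above gives $\pi_1(W')=\pi_1(W)/\langle\langle (\iota_1)_*(c_1),\dots,(\iota_1)_*(c_l)\rangle\rangle$, where the $c_k$ generate the $F_l$ factor of $\pi_1(W_1)=\pi_1(W_1')*F_l$. Gluing $P_0$ adds nothing, since $\pi_1(W_0)\to\pi_1(P_0)$ is an isomorphism. Because $(\iota_1)_*$ is an isomorphism, $\pi_1(W')\cong\pi_1(W_1)/\langle\langle F_l\rangle\rangle=\pi_1(W_1')$. Under this identification the map is exactly $(\iota_1')_*$, since $P_1$ kills precisely $F_l$ and $\pi_1(W_1')\to\pi_1(P_1)$ is an isomorphism. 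Hence $(\iota_1')_*$ is an isomorphism.

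For the formula, use the commutative diagram proved above. It gives $(\iota_0')_*=(\iota_W)_*(\iota_0)_*i_{\pi_1(W_0')}$ and $(\iota_W)_*(\iota_1)_*=(\iota_1')_*p_{\pi_1(W_1')}$, the latter because $(\iota_W)_*(\iota_1)_*$ kills $F_l$ and agrees with $(\iota_1')_*$ on $\pi_1(W_1')$. Therefore
\[(\iota_0')_*=(\iota_W)_*(\iota_1)_*\alpha\, i_{\pi_1(W_0')}=(\iota_1')_*\,p_{\pi_1(W_1')}\,\alpha\, i_{\pi_1(W_0')}=(\iota_1')_*\alpha',\]
which is the claimed equality.

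\textbf{Rational homology.} This step is the main obstacle. The $\pi_1$-isomorphism gives $H_1(W_1';\mathbb Q)\cong H_1(W';\mathbb Q)$. By Poincaré--Lefschetz duality, or an Euler characteristic and ribbon count, it suffices to show that $H_1(W_0';\mathbb Q)\to H_1(W';\mathbb Q)$ is an isomorphism; the higher-degree maps then follow by duality as in Lemma 2.3. Identify $H_1(W';\mathbb Q)$ with $H_1(W_1;\mathbb Q)/\langle F_l\rangle$. The map from $H_1(W_0';\mathbb Q)=H_1^{irr}(W_0;\mathbb Q)$ is then $\overline\alpha$ followed by this quotient. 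By Lemma 3.1, $\overline\alpha$ maps $H_1^{irr}(W_0;\mathbb Q)$ isomorphically onto $H_1^{irr}(W_1;\mathbb Q)$, which is a complement of the $F_l$ part. The composite is therefore an isomorphism.

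The care needed is in confirming that the quotient by the $F_l$-classes is the same regardless of which decomposition is chosen, and that $H_2$ and $H_3$ are then forced. For the latter, $\chi(W')=\chi(W)$ because $l$ 1-handles and $l$ 2-handles are added, and one must check that $W'$ keeps the relative vanishing used in Lemma 2.3.
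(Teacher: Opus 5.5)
Your proof is correct and takes essentially the same route as the paper. The $\pi_1$-isomorphism comes from the 2-handles of $P_1$ killing exactly the free factor (a pushout of the isomorphism $(\iota_1)_*$), the formula for $\alpha'$ comes from the two-square diagram, and rational acyclicity comes from Lemma 3.1 on $H_1$ followed by duality and the count $\chi(W',W_0')=0$. The independence-of-decomposition worry in your last paragraph is unnecessary: you only use the single identification $W_1\cong W_1'\# l(S^1\times S^2)$ that defines $P_1$, and for it the canonical subspace $H_1^{irr}(W_1;\mathbb{Q})$ is already a complement to the span of the core classes.
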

\begin{proof}
The right square of the diagram above is given by
\[\begin{tikzcd}[ampersand replacement=\&]
	{\pi_1(W')} \& {\pi_1(W')} \\
	{\pi_1(W)} \& {\pi_1(W_1)}
	\arrow["{(\iota_1')_*}"', from=1-2, to=1-1]
	\arrow["{(\iota_W)_*}", from=2-1, to=1-1]
	\arrow["{p_{\pi_1(W')}}"', from=2-2, to=1-2]
	\arrow["{(\iota_1)_*}", from=2-2, to=2-1]
\end{tikzcd}\]
Since $(\iota_1)_*$ is an isomorphism and the additional 2-handles (those in $P_1$ rel $W_1$)  are attached along a generating set of $\ker(p_{\pi_1(W_1')})$, we have that $\ker((\iota_W)_*)=(\iota_1)_*(\ker(p_{\pi_1(W_1')}))$. Hence, we must have that $(\iota_1')_*$ is an isomorphism. Furthermore, the two square diagram also shows that $\alpha'=(\iota_1')_*^{-1}\circ (\iota_0')_*$. It remains to show that $W'$ is a rational homology cobordism. Again, using the diagram, Lemma 3.1, and the fact that $\iota_1'$ induces an isomorphism on $\pi_1$ we see that in rational homology $(\iota_0')_*$ is an isomorphism with image $(\iota_1')_*(H_1(W_1';\mathbb{Q}))$. By Poincaré-Lefschetz, we have that $H_j(W',W_i';\mathbb{Q})=0$ for $j=1,3$ and $i=0,1$. Since obtaining $W'$ requires attaching an equal number of 1-handles (resp. 3-handles) and 2-handles to $W_0'\times \{1\}$ (resp. $W_1'\times \{0\}$), the relative chain complex $C_{*}(W',W_i';\mathbb{Q})$ has Euler characteristic $0$. Therefore, we must have that $H_2(W',W_i';\mathbb{Q})=0$. Thus, $H_j(W',W_i';\mathbb{Q})=0$ for all $j$ and $i=0,1$.
\end{proof}
Combining Proposition 3.3 and Proposition 3.4, we obtain the following.
\begin{theorem}
Suppose that $W_0 \leq W_1$ and $W_1 \leq W_0$. Then $W_0'\leq W_1'$, $W_1'\leq W_0'$, and $W_0$ and $W_1$ have the same number of $S^1 \times S^2$ summands. In particular, if Theorem 1.2 is true on the set of all closed, connected, oriented 3-manifolds with no $S^1 \times S^2$ summands, then it is true for all closed, connected, oriented 3-manifolds.
\end{theorem}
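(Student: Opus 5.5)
The plan is to combine Proposition 3.3 and Proposition 3.4, with the hypotheses of the latter supplied by Corollary 1.3 of \cite{FriedlMisevZentner2025RibbonRH}. Let $W$ be a rational homology ribbon cobordism realizing $W_0\leq W_1$, and $V$ one realizing $W_1\leq W_0$.

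First, Proposition 3.3 applies directly to the pair $W,V$. It shows that $W_0$ and $W_1$ have the same number $l$ of $S^1\times S^2$ summands. This is the third assertion of the theorem.

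Next, I produce $W_0'\leq W_1'$. Since both $W_0\leq W_1$ and $W_1\leq W_0$ hold, Corollary 1.3 of \cite{FriedlMisevZentner2025RibbonRH} (as recalled in Section 1) gives that the inclusion $\iota_1:W_1\hookrightarrow W$ induces an isomorphism on $\pi_1$. Together with the equality of $S^1\times S^2$ counts from the first step, all hypotheses of Proposition 3.4 hold for $W$. Proposition 3.4 then shows that the modified cobordism $W'$ is a rational homology ribbon cobordism from $W_0'$ to $W_1'$. Hence $W_0'\leq W_1'$.

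By symmetry, I apply the same reasoning to $V$ with the roles of $W_0$ and $W_1$ exchanged. The inclusion $\epsilon_1:W_0\hookrightarrow V$ is a $\pi_1$-isomorphism by the same corollary, and the summand counts agree. Proposition 3.4 therefore yields a rational homology ribbon cobordism $V'$ from $W_1'$ to $W_0'$, so $W_1'\leq W_0'$.

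For the reduction claim, the manifolds $W_0'$ and $W_1'$ have no $S^1\times S^2$ summands. So if Theorem 1.2 holds in that class, there is an orientation preserving homeomorphism $W_0'\cong W_1'$. Then
\[
W_0\cong W_0'\#l(S^1\times S^2)\cong W_1'\#l(S^1\times S^2)\cong W_1 ,
\]
and the middle homeomorphism is orientation preserving. Here I use that connected sum is well defined up to orientation preserving homeomorphism, and that $S^1\times S^2$ admits an orientation reversing self-homeomorphism, so the way the $l$ summands are attached does not matter.

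The main point to check is that $W_0'$ and $W_1'$ are well defined up to orientation preserving homeomorphism. This follows from uniqueness of prime decompositions (Kneser–Milnor): the irreducible summands of $W_i$ are determined, so the choice of identification $W_i\cong W_i'\#F_i$ used in building $W'$ does not affect the homeomorphism type of $W_i'$. This is needed so that the homeomorphism supplied by Theorem 1.2 for the $'$-manifolds corresponds to the original $W_0$ and $W_1$. Beyond this, the argument is a direct assembly of the cited results.
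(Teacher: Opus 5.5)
Your proposal is correct and follows the paper's proof essentially step for step: the paper also invokes the Friedl--Misev--Zentner corollary to get that $\iota_1$ is a $\pi_1$-isomorphism, uses Proposition 3.3 for the equal number of $S^1\times S^2$ summands, applies Proposition 3.4 to $W$ (and, by symmetry, to $V$), and concludes $W_0\cong W_0'\#l(S^1\times S^2)\cong W_1'\#l(S^1\times S^2)\cong W_1$. Your extra remark that $W_i'$ is well defined up to orientation preserving homeomorphism (by Kneser--Milnor uniqueness) makes explicit a point the paper leaves implicit, and it is a fine addition.
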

\begin{proof}
Let $W$ be a rational homology cobordism where $W_0\leq W_1$. By Corollary 1.6 of \cite{FriedlMisevZentner2025RibbonRH}, $\iota_1$ induces an isomorphism on $\pi_1$. By Proposition 3.3, $W_0$ and $W_1$ have the same number of $S^1 \times S^2$ summands. In particular, the assumptions of Proposition 3.4 hold and so $W_0'\leq W_1'$. By symmetry, we also have that $W_1'\leq W_0'$. This proves the first part. 

The second part follows directly from the first: if Theorem 1.2 is true on the set of all closed, connected, oriented 3-manifolds with no $S^1 \times S^2$ summands, then by the first part, $W_0'\cong W_1'$ (where $\cong$ means oriented homeomorphic). In particular, $W_0\cong W_0' \# l(S^1 \times S^2)\cong W_1'\#l(S^1 \times S^2)\cong W_1$.
\end{proof}
\section{The Case Without $S^1 \times S^2$ Summands}
In this section, we show that Theorem 1.2 is true on the set of closed, connected, oriented 3-manifolds with no $S^1 \times S^2$ summands. First, we show that, if $W$ is such that $W_0\leq W_1$ and, furthermore, $W_1\leq W_0$, then $W$ is an integer homology cobordism. We prove the following lemma.
\begin{lemma}
Let $W$ be a rational homology cobordism where $W_0\leq W_1$ and $W_0\cong W_1$. Assume that $W_0$ and $W_1$ have no $S^1 \times S^2$ summands. Then $(\iota_0)_*:H_1(W_0)\rightarrow H_1(W)$ is an isomorphism.
\end{lemma}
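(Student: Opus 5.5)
The plan is to use the long exact sequence of the pair $(W,W_0)$ together with Lemma 2.2. Since $H_2(W,W_0)=0$, the sequence reads
\[
0\rightarrow H_1(W_0)\xrightarrow{(\iota_0)_*} H_1(W)\rightarrow H_1(W,W_0)\rightarrow 0 .
\]
Exactness on the right holds because $W$ is obtained from $W_0\times[0,1]$ by attaching $1$- and $2$-handles, so $\tilde H_0$ of the pair vanishes. Injectivity of $(\iota_0)_*$ is therefore already given by Lemma 2.2. It remains to show that the cokernel $H_1(W,W_0)$ vanishes. Since $W$ is a rational homology cobordism, this cokernel is a finite group.

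Next I compare the two groups abstractly. By Remark 3.2 (Theorem 1.5 of \cite{FriedlMisevZentner2025RibbonRH}), $(\iota_1)_*$ is an isomorphism on $\pi_1$, hence on $H_1$. Combined with $W_0\cong W_1$, this gives $H_1(W)\cong H_1(W_1)\cong H_1(W_0)$ as abstract finitely generated abelian groups. So $(\iota_0)_*$ is an injection $A\hookrightarrow A$ of a finitely generated abelian group into itself with finite cokernel.

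On torsion subgroups, $(\iota_0)_*$ restricts to an injection $TA\hookrightarrow TA$ of a finite group into itself, hence is a bijection there. Consequently, $H_1(W,W_0)$ is isomorphic to the cokernel of the induced map $\bar\phi:\mathbb{Z}^b\rightarrow\mathbb{Z}^b$ on $H_1/\text{torsion}$, where $b=b_1(W_0)$. The whole lemma thus reduces to showing $\det\bar\phi=\pm1$. When $b_1(W_0)=0$ we are already done.

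The free part is the main obstacle, because an injection $\mathbb{Z}^b\rightarrow\mathbb{Z}^b$ need not be onto. My first attempt would be a duality argument using Proposition 2.2 of \cite{FriedlMisevZentner2025RibbonRH}, namely $(\iota_0)_*[W_0]=(\iota_1)_*[W_1]$. Consider the bilinear pairing on $H^1(W)/\text{tors}\times H^2(W)/\text{tors}$ given by $B(a,x)=\langle a\cup x,(\iota_0)_*[W_0]\rangle$.
\begin{itemize}
\item On one hand, $B$ is the pullback under $\iota_0^*$ of the unimodular Poincaré duality pairing of $W_0$. Its determinant is therefore $\pm\det(\iota_0^*|_{H^1})\det(\iota_0^*|_{H^2})$, and $\det(\iota_0^*|_{H^1})=\pm\det\bar\phi$.
\item On the other hand, $B$ is the pullback under $\iota_1^*$ of the pairing of $W_1$, and $\iota_1^*$ is an isomorphism on $H^1$.
\end{itemize}
Comparing the two expressions, and also running the same argument through the reversed inclusions, should force $\det\bar\phi=\pm1$. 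The delicate point is controlling $\iota^*$ on $H^2/\text{tors}$, for which I would use the exact sequence of $(W,W_1)$ and the fact that $H_3(W,W_1)=0$.

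If this duality argument stalls, my fallback uses the hypothesis of no $S^1\times S^2$ summands. Then all of $H_1$ comes from irreducible summands, and any summand with $b_1>0$ is aspherical. By Lemma 2.1, $\alpha$ maps each factor $\pi_1(M_i)$ into a conjugate of some $\pi_1(N_j)$, and by Proposition 2.3 this restriction is an isomorphism whenever $M_i$ is aspherical. Since $W_0\cong W_1$, the aspherical summands of the two sides correspond bijectively. It follows that $\bar\phi$ is, up to a permutation of blocks, a direct sum of isomorphisms, so $\det\bar\phi=\pm1$.
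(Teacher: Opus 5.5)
Your fallback is essentially the paper's proof. The duality route you lead with does not work, and one step of the fallback needs an extra line.

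\textbf{The duality route is circular.} Computed through $W_0$, $\det B=\pm\det\bar\phi$. This is because the pairing of $W_0$ is unimodular, and $\iota_0^*$ is onto $H^2(W_0)$ (as $H^3(W,W_0)\cong H_1(W,W_1)=0$). Since the ranks agree, $\iota_0^*$ is then an isomorphism on $H^2/\mathrm{tors}$. Computed through $W_1$, $\det B=\pm\det(\iota_1^*|_{H^2/\mathrm{tors}})$. The sequence of $(W,W_1)$, with $H^2(W,W_1)\cong H_2(W,W_0)=0$ and $H^3(W,W_1)\cong H_1(W,W_0)$, shows that $\iota_1^*$ is injective on $H^2$ with cokernel inside $H_1(W,W_0)$. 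That is the very group you are trying to kill. So the comparison only restates Poincar\'e--Lefschetz duality as $|\det\bar\phi|=|\det(\iota_1^*|_{H^2/\mathrm{tors}})|$. Running it through the reversed inclusions uses the same maps and reproduces the same identity. What actually forces unimodularity is the geometric input of Proposition 2.2, which the pairing does not see.

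\textbf{The fallback.} Your torsion reduction is a cleaner version of the paper's final $\mathbb{Z}^b\oplus G$ step: an injection between abstractly isomorphic finite groups is bijective, and then the snake lemma identifies $H_1(W,W_0)$ with $\mathrm{coker}\,\bar\phi$. Your use of Lemma 2.1 and Proposition 2.2 matches the paper.

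The step that is not justified is \emph{since $W_0\cong W_1$, the aspherical summands correspond bijectively}. The homeomorphism only gives equal numbers of summands. You need the assignment $i\mapsto j(i)$ induced by $\alpha$ to be a bijection, and $\pi_1$-injectivity alone does not prevent two factors from landing in conjugates of the same $\pi_1(N_j)$.

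To fix this, restrict to summands with $b_1>0$. Suppose two of them, $M_i$ and $M_{i'}$, went to the same $N_j$. Then $\bar\phi$ would send a free abelian group of rank $b_1(M_i)+b_1(M_{i'})$ into $H_1(N_j)/\mathrm{tors}$, which has rank $b_1(N_j)=b_1(M_i)$. This contradicts the injectivity of $\bar\phi$, which holds because $\bar\phi\otimes\mathbb{Q}$ is an isomorphism. Equal counts, or rational surjectivity as the paper uses, then make $i\mapsto j(i)$ a bijection on these summands. So $\bar\phi$ is a block permutation of unimodular blocks, and $\det\bar\phi=\pm1$. Summands with $b_1=0$ need no such claim, since your torsion argument already covers them.

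The results you cite as Lemma 2.2 and Proposition 2.3 are Lemma 2.3 and Proposition 2.2 in the paper.
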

\begin{proof}
By Lemma 2.3, the inclusion $\iota_0$ induces an injection on $H_1(\_,\mathbb{Z})$. Furthermore, by Theorem 1.5 of \cite{FriedlMisevZentner2025RibbonRH}, $\iota_1$ induces an isomorphism on $\pi_1$, and hence $H_1(\_,\mathbb{Z})$. Therefore, the map $\alpha^{ab}:\bigoplus_{i=1}^{k}H_1(M_i)\rightarrow \bigoplus_{i=1}^{k}H_1(N_i)$, which is equal to $(\iota_1)_*^{-1}\circ(\iota_0)_*:H_1(W_0)\rightarrow H_1(W_1)$, is an injection. By Lemma 2.1, we  know that, for each $i$, $\alpha^{ab}(H_1(M_i))\subset H_1(N_j)$ for some $j$. By Proposition 2.2, if $M_i$ is an aspherical summand, we must have that $\alpha^{ab}(H_1(M_i))=H_1(N_j)$. Since $W_0\cong W_1$, $\text{rk}(H_1(W_0))=\text{rk}(H_1(W_1))$. Furthermore, since $W$ is a rational homology cobordism, the universal coefficient theorem for homology gives that $H_1(W_0,\mathbb{Q})\cong H_1(W_0)\otimes \mathbb{Q}$, $H_1(W_1,\mathbb{Q})\cong H_1(W_1)\otimes \mathbb{Q}$, and that the induced map $(\iota_1)_*^{-1}\circ (\iota_0)_*$ on rational homology is given by $(\alpha^{ab}\otimes \mathbb{Q})(x\otimes q)=\alpha^{ab}(x)\otimes q$. Hence, the map $\alpha^{ab}\otimes \mathbb{Q}$ defines a map $\alpha^{ab}\otimes\mathbb{Q}:\bigoplus_{i=1}^{k}(H_i(M_i)\otimes \mathbb{Q})\rightarrow \bigoplus_{i=1}^{k}(H_i(N_i)\otimes \mathbb{Q})$. Since $\alpha^{ab}\otimes\mathbb{Q}$ is an isomorphism and each summand maps stably into another summand, any summand with non-zero free rank in integral homology must lie in the image of $\alpha^{ab}$. That is, if $j$ is such that $\text{rk}(H_1(N_j))>0$, then there is $i$ such that $\alpha^{ab}(H_1(M_i))\subset H_1(N_j)$. Since only aspherical summands can have $\text{rk}(H_1(N_j))>0$, the previous inclusion is in fact an equality. Hence, $\alpha^{ab}$ may be viewed as a map $\alpha^{ab}:\mathbb{Z}^b\oplus G\rightarrow \mathbb{Z}^b\oplus G$ where $G$ is a finite abelian group and such that $\alpha^{ab}(\mathbb{Z}^b)=\mathbb{Z}^b$. Since $\alpha^{ab}$ is injective, we must have that $\alpha^{ab}(G)=G$. This shows that $\alpha^{ab}$ is an isomorphism. Therefore, the inclusion $\iota_0$ induces an isomorphism on homology.
\end{proof}
 Using the result above, we have the following.
\begin{theorem}
Suppose that $W_0\leq W_1$ and $W_1 \leq W_0$. Assume that $W_0$ and $W_1$ have no $S^1 \times S^2$ summands. Let $W$ be a rational homology ribbon cobordism where $W_0\leq W_1$. Then $W$ is an integer homology cobordism.
\end{theorem}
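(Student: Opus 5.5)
The plan is to show that both $H_*(W,W_0)$ and $H_*(W,W_1)$ vanish with integer coefficients. The only input not already available is that $(\iota_0)_*:H_1(W_0)\rightarrow H_1(W)$ is an isomorphism. Lemma 4.1 would give this directly, but it requires the two ends to be homeomorphic. So I would not apply it to $W$ itself. Instead, let $V$ be a rational homology ribbon cobordism with $W_1\leq W_0$, and apply Lemma 4.1 to the composite cobordism $WV$ from $W_0$ to $W_0$. As in the proof of Proposition 3.3, $WV$ is a rational homology ribbon cobordism, and its ends have no $S^1\times S^2$ summands. Lemma 4.1 therefore gives that $(\delta_0)_*:H_1(W_0)\rightarrow H_1(WV)$ is an isomorphism.

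\textbf{Step 1: $(\iota_0)_*$ is an isomorphism on $H_1$.} Factor $(\delta_0)_*$ as $H_1(W_0)\xrightarrow{(\iota_0)_*}H_1(W)\xrightarrow{(\delta_W)_*}H_1(WV)$. I would show the second map is injective as follows.
\begin{itemize}
\item By Corollary 1.6 of \cite{FriedlMisevZentner2025RibbonRH}, $(\iota_1)_*$ is an isomorphism on $\pi_1$, hence on $H_1$.
\item Mayer--Vietoris for $WV=W\cup_{W_1}V$ therefore shows that $(\delta_V)_*:H_1(V)\rightarrow H_1(WV)$ is an isomorphism. The relevant part of the sequence is $H_1(W_1)\rightarrow H_1(W)\oplus H_1(V)\rightarrow H_1(WV)\rightarrow \widetilde H_0(W_1)=0$, and the first map has an isomorphism in its $H_1(W)$ component.
\item Under these identifications, $(\delta_W)_*$ becomes $H_1(W)\cong H_1(W_1)\xrightarrow{(\epsilon_0)_*}H_1(V)\cong H_1(WV)$.
\item The middle map is injective by Lemma 2.3 applied to $V$, since $W_1$ is the incoming end of $V$.
\end{itemize}
Now the composite $(\delta_W)_*\circ(\iota_0)_*$ is an isomorphism and $(\delta_W)_*$ is injective. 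Hence $(\delta_W)_*$ is also surjective, so it is an isomorphism, and therefore $(\iota_0)_*$ is an isomorphism. This step is the main obstacle. A naive argument composing injections $H_1(W_0)\hookrightarrow H_1(W_1)\hookrightarrow H_1(W_0)$ fails, because an injective endomorphism of $\mathbb{Z}^b\oplus G$ need not be surjective. That is exactly why Lemma 4.1, which uses the asphericity argument of Proposition 2.2, has to be brought in through $WV$.

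\textbf{Step 2: the relative groups vanish.} I would use the long exact sequences of the pairs, Poincaré--Lefschetz duality $H_k(W,W_0)\cong H^{4-k}(W,W_1)$ (and symmetrically), and the universal coefficient theorem.
\begin{itemize}
\item Degrees $0$ and $1$: $H_0(W,W_i)=0$ by connectedness. Step 1 gives $H_1(W,W_0)=0$, and surjectivity of $(\iota_1)_*$ on $\pi_1$ gives $H_1(W,W_1)=0$.
\item $H_2(W,W_0)=0$ by Lemma 2.3.
\item $H_3(W,W_0)\cong H^1(W,W_1)\cong \text{Hom}(H_1(W,W_1),\mathbb{Z})\oplus\text{Ext}(H_0(W,W_1),\mathbb{Z})=0$, and similarly $H_4(W,W_0)\cong H^0(W,W_1)=0$.
\item Dually, $H_2(W,W_1)\cong H^2(W,W_0)\cong\text{Hom}(H_2(W,W_0),\mathbb{Z})\oplus\text{Ext}(H_1(W,W_0),\mathbb{Z})=0$.
\item Likewise $H_3(W,W_1)\cong H^1(W,W_0)=0$ and $H_4(W,W_1)\cong H^0(W,W_0)=0$.
\end{itemize}
Thus both inclusions induce isomorphisms on integral homology, and $W$ is an integer homology cobordism.
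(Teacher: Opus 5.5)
Your proposal is correct and follows essentially the paper's proof. You apply Lemma 4.1 to the composite $WV$, use the injectivity from Lemma 2.3 for $V$ to upgrade this to $(\iota_0)_*$ being an isomorphism on $H_1$, and then kill the relative groups via Lemma 2.3, Poincaré--Lefschetz duality and universal coefficients. The only differences are cosmetic. You factor $(\delta_0)_*$ through $H_1(WV)$ via Mayer--Vietoris, where the paper uses the composite $(\epsilon_1)_*^{-1}\circ(\epsilon_0)_*\circ(\iota_1)_*^{-1}\circ(\iota_0)_*$ on $H_1(W_0)$. You also spell out the dual degrees, which the paper compresses into a single appeal to Poincaré--Lefschetz.
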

\begin{proof}
We glue cobordisms, as in the proof of Proposition 3.3. Let $V$ be a rational homology ribbon cobordism such that $W_1 \leq W_0$. We obtain a rational homology ribbon cobordism $WV$ from $W_0$ to $W_0$ by gluing both cobordisms along the $W_1$ boundary component. As in Proposition 3.3, we have the following commutative diagram.
\[\begin{tikzcd}[ampersand replacement=\&]
	\&\& WV \&\& \\
	\\
	{W_0} \& W \& {W_1} \& V \& {W_0}
	\arrow["{\delta_{0}}"{pos=0.4}, from=3-1, to=1-3]
	\arrow["{\iota_0}"', from=3-1, to=3-2]
	\arrow["{\delta_W}"{pos=0.2}, from=3-2, to=1-3]
	\arrow["{\delta_{W_1}}"'{pos=0.2}, from=3-3, to=1-3]
	\arrow["{\iota_1}", from=3-3, to=3-2]
	\arrow["{\epsilon_0}"', from=3-3, to=3-4]
	\arrow["{\delta_V}"'{pos=0.2}, from=3-4, to=1-3]
	\arrow["{\delta_1}"'{pos=0.4}, from=3-5, to=1-3]
	\arrow["{\epsilon_1}", from=3-5, to=3-4]
\end{tikzcd}\]
Here, every map is an inclusion. By Lemma 4.1, $(\delta_1)^{-1}_*\circ (\delta_0)_*:H_1(W_0)\rightarrow H_1(W_0)$ is an isomorphism. Since, on $H_1$, $(\delta _1)_*^{-1}\circ (\delta_0)_*=(\epsilon_1)_*^{-1}\circ(\epsilon_0)_*\circ(\iota_1)_*^{-1} \circ(\iota_0)_*$, we obtain that $(\epsilon_1)_*^{-1}\circ(\epsilon_0)_*$ is surjective and hence, by Lemma 2.3, an isomorphism. From this, also using Lemma 2.3, we obtain that $(\iota_1)_*^{-1} \circ(\iota_0)_*$ is an isomorphism. In particular, $(\iota_0)_*$ induces an isomorphism on $H_1$. Hence, we have that $H_1(W,W_0)=0$. By Lemma 2.3, $H_2(W,W_0)=0$. Furthermore, $H_3(W,W_0)\cong H^1(W,W_1)\cong\text{Hom}(H_1(W,W_1),\mathbb{Z})\cong0$. Thus, $H_i(W,W_0)=0$ for all $i$. By Poincaré-Lefschetz, $H_i(W,W_1)=0$ for all $i$. Hence, $W$ is an integer homology cobordism. 
\end{proof}
An integer homology cobordism alone is already quite restrictive. Now, we wish to show that $\alpha:\pi_1(W_0)\rightarrow \pi_1(W_1)$ is `almost an isomorphism'. By this, we mean the following: if $\pi_1(W_0)=\pi_1(M_1)*...*\pi_1(M_{k_0})$ and $\pi_1(W_1)=\pi_1(N_1)*...*\pi_1(N_{k_1})$ are Grushko decompositions, then $k_0=k_1$ and there is $\sigma\in S_{k_0}$ such that $\alpha(\pi_1(M_i))=g_i\pi_1(N_{\sigma(i)})g_i^{-1}$ for some $g_i\in \pi_1(W_1)$. To do this, we work in $K(\pi,1)$ spaces. 

A 3-manifold $M=M_1\#...\#M_k$ has $K(\pi,1)$ constructed by attaching 3-balls to each attaching sphere of the prime decomposition and then attaching the necessary cells in each summand so that all homotopy groups vanish. As mentioned in Section 2, we denote by $j_M$ the canonical inclusion $j_M:M\rightarrow K(\pi_1(M),1)$. This map induces an isomorphism on $\pi_1$, and sends the generator $[M]$ to $(j_{M_1})_*([M_1])+...+(j_{M_k})_*([M_k])\in H_3(K(\pi_1(M),1))\cong \bigoplus_{i=1}^{k} H_3(K(\pi_1(M_i),1))$. If $M_i$ is aspherical, then $(j_{M_i})_*$ is simply the identity on $H_3$. If $M_i$ is spherical, then, by Theorem 9.1 of \cite{brown1982cohomology}, $H_3(K(\pi_1(M_i),1))\cong \mathbb{Z}/|\pi_1(M_i)|\mathbb{\cdot Z}$ and the map $(j_{M_i})_*$ sends $[M_i]$ to a generator of $H_3(K(\pi_1(M_i),1))$. Given the isomorphism $H_3(K(\pi_1(M),1))\cong \bigoplus_{i=1}^{k} H_3(K(\pi_1(M_i),1))$, we have a projection map $(p_{M_i})_*:H_3(K(\pi_1(M),1)) \rightarrow H_3(K(\pi_1(M_i),1))$. Finally, we use the following convention: given a map $f:\pi_1(M)\rightarrow \pi_1(N)$, we denote also by $f$ the induced map $K(\pi_1(M),1)\xrightarrow{f}K(\pi_1(N),1)$, which is defined uniquely up to homotopy. We prove the following lemma.
\begin{lemma}
Let $M=M_1\#...\#M_k$ be a closed, connected, oriented 3-manifold with prime decomposition consisting of only irreducible summands. Let $f:\pi_1(M)\rightarrow \pi_1(M)$ be an injective endomorphism of $\pi_1(M)$. Let $i\in \{1,...,k\}$ and suppose that $f(\pi_1(M_j))$ is not contained in a conjugate of $\pi_1(M_i)$ for all $j\in \{1,...,k\}$. Then $(p_{\pi_1(M_i)})_*\circ f_*:H_n(K(\pi_1(M),1))\rightarrow H_n(K(\pi_1(M),1))$ is the zero map.
\end{lemma}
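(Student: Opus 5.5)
The idea is to reduce to the wedge decomposition of $K(\pi_1(M),1)$ and show that each wedge summand is sent to a null-homotopic map; here $n\geq 1$, since on $H_0$ no nonzero map is zero.

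\textbf{Step 1: split the homology.} Write $\pi=\pi_1(M)=\pi_1(M_1)*\dots*\pi_1(M_k)$. This is a Grushko decomposition, because $M$ has no $S^1\times S^2$ summands, so there is no free factor. For $K(\pi,1)$ we take the wedge $\bigvee_j K(\pi_1(M_j),1)$; this is homotopy equivalent to the model built at the start of this section. Mayer--Vietoris, or the standard wedge formula, gives for $n\geq 1$
\[
H_n(K(\pi,1))\cong \bigoplus_{j=1}^k H_n(K(\pi_1(M_j),1)),
\]
with the $j$-th summand the image of the inclusion $c_j:K(\pi_1(M_j),1)\to K(\pi,1)$, which induces the $j$-th factor inclusion on $\pi_1$. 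It therefore suffices to show that $(p_{\pi_1(M_i)})_*\circ f_*\circ (c_j)_*=0$ for each $j$.

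\textbf{Step 2: the relevant group homomorphism is trivial.} The map $p_{\pi_1(M_i)}\circ f\circ c_j:K(\pi_1(M_j),1)\to K(\pi_1(M_i),1)$ induces on $\pi_1$ the homomorphism $\phi_j=p_{\pi_1(M_i)}\circ f|_{\pi_1(M_j)}$. If $\pi_1(M_j)$ is trivial (the case $M_j=S^3$), then $\phi_j$ is trivial. Otherwise, apply Lemma 2.1 to the injective map $f$: it gives $f(\pi_1(M_j))=hKh^{-1}$ with $K\subset \pi_1(M_l)$ for some $l$ and some $h\in\pi$. The hypothesis that $f(\pi_1(M_j))$ is not contained in a conjugate of $\pi_1(M_i)$ forces $l\neq i$. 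The projection $p_{\pi_1(M_i)}$ is a homomorphism that kills every factor $\pi_1(M_l)$ with $l\neq i$. Hence
\[
p_{\pi_1(M_i)}(hKh^{-1})=p(h)\,p(K)\,p(h)^{-1}=\{e\},
\]
so $\phi_j$ is trivial.

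\textbf{Step 3: conclude.} Maps from a connected CW complex into a $K(G,1)$ are determined up to homotopy by the induced map on $\pi_1$. The same fact was used in constructing D$*$. Since $\phi_j$ is trivial, $p_{\pi_1(M_i)}\circ f\circ c_j$ is homotopic to a constant map, and so it induces zero on $H_n$ for $n\geq 1$. Summing over $j$ via Step 1 shows that $(p_{\pi_1(M_i)})_*\circ f_*$ vanishes on all of $H_n(K(\pi,1))$ for $n\geq1$. Composing with the inclusion of the factor $K(\pi_1(M_i),1)$ back into $K(\pi_1(M),1)$ keeps it zero, which matches the target written in the statement.

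The main point needing care is Step 2. The projection is not equivariant under conjugation in any naive sense, so one must use that it is a genuine homomorphism and that Lemma 2.1 places the entire image inside a single conjugate $hKh^{-1}$. The absence of a free factor is what lets Lemma 2.1 apply to this decomposition.
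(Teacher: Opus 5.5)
Your proof is correct and follows essentially the same route as the paper: you split $K(\pi_1(M),1)$ as a wedge of the $K(\pi_1(M_j),1)$, apply Lemma 2.1 to each $f(\pi_1(M_j))$, and use that maps into a $K(G,1)$ are determined up to homotopy by their effect on $\pi_1$. The only difference is cosmetic (you compose with the projection first and show $p_{\pi_1(M_i)}\circ f\circ c_j$ is null-homotopic, while the paper shows $f|_{K(\pi_1(M_j),1)}\simeq \delta_{b_j}\circ h_j$ lands in the $b_j$-th wedge summand with $b_j\neq i$), and your restriction to $n\geq 1$ is a correct and needed caveat, since the statement fails on $H_0$ and the lemma is only used for $n=3$.
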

\begin{proof}
Let $f:K(\pi_1(M),1)\rightarrow K(\pi_1(M),1)$ be the induced map on $K(\pi,1)$ spaces. Recall that $K(\pi_1(M),1)$ may be decomposed as the wedge of $K(\pi,1)$ spaces of the summands. Hence, if we let $f_j:K(\pi_1(M_j),1)\rightarrow K(\pi_1(M),1)$ be the restriction map $f|_{K(\pi_1(M_j),1)}$, then we have that $f_*:H_n(K(\pi_1(M),1))\rightarrow H_n(K(\pi_1(M),1))$ decomposes as $\bigoplus_{j=1}^{k}(f_j)_*$. On fundamental groups, by Lemma 2.1, the map $f_j$ has image $g_jK_jg_j^{-1}$ where $g_j\in \pi_1(M)$ and $K_j\subset \pi_1(M_{b_j})$ for some $b_j\in \{1,...,k\}$. Let $h_j:\pi_1(M_j)\rightarrow \pi_1(M_{b_j})$ be the map such that $f_j=g_{j}h_jg_j^{-1}$. Denote also by $h_j$ the induced map on $K(\pi,1)$ spaces and by $\delta_{b_j}$ the natural inclusion $K(\pi_1(M_{b_j}),1)\hookrightarrow K(\pi_1(M),1)$. The maps $\delta_{b_j}\circ h_{j}$ and $f_j$ induce the same maps on $\pi_1$ up to conjugation, and are hence homotopy equivalent (see \cite{Hatcher2002AT}, for instance). In particular, the map $f_j$ satisfies $f_j(H_n(K(\pi_1(M_j),1)))\subset H_n(K(\pi_1(M_{b_j}),1))$. Since $f=\bigoplus_{j=1}^{k}(f_j)_*$ and $b_j\neq i$ for all $j$, we must have that $(p_{\pi_1(M_i)})_*\circ f_*=0$.
\end{proof}
Using this lemma, we may prove the following.
\begin{lemma}
Suppose that $W_0\leq W_1$. Let $W$ be a rational homology ribbon cobordism from $W_0$ to $W_1$. Suppose that $W_0\cong W_1\cong M_1\#...\#M_{k}$ where each $M_i$ is irreducible. Then there is a permutation $\sigma\in S_{k}$ such that $\alpha(\pi_1(M_i))=g_i\pi_1(M_{\sigma(i)})g_i^{-1}$.
\end{lemma}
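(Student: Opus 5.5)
We will track the fundamental class through the diagram D$*$ and the $K(\pi,1)$ spaces, using Lemma 4.3 to rule out summands that are missed by $\alpha$.

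\textbf{Step 1: the assignment $i\mapsto\sigma(i)$.} By Theorem 1.5 of \cite{FriedlMisevZentner2025RibbonRH}, $\iota_1$ induces an isomorphism on $\pi_1$, so $\alpha$ is defined and injective. Identifying $W_1$ with $W_0$ via the given homeomorphism, we regard $\alpha$ as an injective endomorphism of $\pi_1(M)$, where $M=M_1\#\cdots\#M_k$. By Lemma 2.1, for each $i$ there is an index $\sigma(i)$ with $\alpha(\pi_1(M_i))\subset g_i\pi_1(M_{\sigma(i)})g_i^{-1}$. The index is unique, since distinct free factors intersect trivially up to conjugacy and nontrivial groups are not conjugate into two different factors. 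It remains to show that $\sigma$ is a bijection and that each inclusion is an equality.

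\textbf{Step 2: $\sigma$ is surjective.} Argue via fundamental classes. By Proposition 2.2 of \cite{FriedlMisevZentner2025RibbonRH}, $(\iota_0)_*[W_0]=(\iota_1)_*[W_1]$ in $H_3(W)$. Pushing this through $j_W$ and $\iota_1^{-1}$ gives
\[
\alpha_*\bigl((j_{W_0})_*[W_0]\bigr)=(j_{W_1})_*[W_1]
\]
in $H_3(K(\pi_1(M),1))$. The right-hand side is $\sum_j (j_{M_j})_*[M_j]$, and every component of it is nonzero: it is $[M_j]$ in the aspherical case and a generator of $\mathbb{Z}/|\pi_1(M_j)|$ in the spherical case. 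There are no simply connected summands, since $M_j$ is irreducible and not $S^3$ in a nontrivial decomposition; if $M=S^3$ the statement is trivial.

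Suppose some index $i$ is not in the image of $\sigma$. Then Lemma 4.3 gives $(p_{\pi_1(M_i)})_*\circ\alpha_*=0$ on $H_3$. Applying this to the displayed equality yields $(j_{M_i})_*[M_i]=0$, a contradiction. Hence $\sigma$ is surjective, and therefore a bijection of $\{1,\dots,k\}$.

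\textbf{Step 3: each inclusion is an equality.} Fix $i$. Since $\sigma$ is a bijection, only the $i$-th summand maps into the $\sigma(i)$ factor. Projecting the displayed equality to the $\sigma(i)$ component, exactly as in the proof of Lemma 4.3, gives $(\alpha_i)_*(j_{M_i})_*[M_i]=(j_{M_{\sigma(i)}})_*[M_{\sigma(i)}]$, which is the bottom row of D$*$. We split into cases.

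\emph{Aspherical case.} Proposition 2.2 already gives $\alpha(\pi_1(M_i))=g_i\pi_1(M_{\sigma(i)})g_i^{-1}$.

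\emph{Spherical case.} Here $\alpha_i:\pi_1(M_i)\to\pi_1(M_{\sigma(i)})$ is an injection of finite groups, and a nonzero class maps to a generator of $H_3(K(\pi_1(M_{\sigma(i)}),1))\cong\mathbb{Z}/|\pi_1(M_{\sigma(i)})|$. This forces $|\pi_1(M_{\sigma(i)})|$ to divide the order of the source group $\mathbb{Z}/|\pi_1(M_i)|$, so $|\pi_1(M_{\sigma(i)})|\le|\pi_1(M_i)|$. Together with injectivity, $|\pi_1(M_i)|\le|\pi_1(M_{\sigma(i)})|$, so $\alpha_i$ is a bijection and the image is the full conjugate.

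\emph{Mixed case.} An aspherical summand has torsion-free $\pi_1$ of infinite order, so it cannot embed into a finite group. Conversely, a spherical summand cannot embed into a torsion-free group unless it is trivial. Hence mixed pairings do not occur.

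\textbf{Main obstacle.} The delicate step is justifying that $\alpha_*$ on $H_3$ of the $K(\pi,1)$ respects the wedge decomposition componentwise. The conjugations by $g_i$ have to be absorbed, using the fact that homotopy classes into a $K(\pi,1)$ are determined by $\pi_1$ up to conjugation, as in the proof of Lemma 4.3. Uniqueness of $\sigma(i)$ also needs care when $\alpha_i$ has trivial image; this is excluded because $\alpha$ is injective and the $M_i$ are nontrivial.
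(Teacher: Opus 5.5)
Your proof is correct. Steps 1--2 and the aspherical case follow the paper: it too pushes $(\iota_0)_*[W_0]=(\iota_1)_*[W_1]$ into $H_3$ of the $K(\pi,1)$, uses Lemma 4.3 to see that every factor is hit, and cites Proposition 2.2 for aspherical summands. Your uniqueness remark in Step 1 is left implicit in the paper, but it is what licenses applying Lemma 4.3 to an index outside the image of $\sigma$.

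You genuinely differ in the spherical case. The paper argues purely group-theoretically. Following the cycle of $\sigma$ through $i$, the composite $\alpha_{i,n-1}\circ\cdots\circ\alpha_{i,0}$ is an injective self-map of the finite group $\pi_1(M_i)$, hence bijective. This forces every map in the composite to be bijective. You instead project the fundamental-class identity onto the $\sigma(i)$-component to get $(\alpha_i)_*(j_{M_i})_*[M_i]=(j_{M_{\sigma(i)}})_*[M_{\sigma(i)}]$. You then compare orders of the cyclic groups $H_3(K(G,1))\cong\mathbb{Z}/|G|$.

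The paper's route needs only that $\sigma$ is a permutation, plus elementary finite group theory. Yours is local to each summand and avoids cycle-chasing, at the cost of the componentwise $H_3$ identity and the group-homology computation. Your projection is exactly where bijectivity of $\sigma$ is used; otherwise several summands would contribute to the same component. That same identity would also settle the aspherical case directly. It says the map $M_i\to M_{\sigma(i)}$ realizing $\alpha_i$ has degree one, so it is $\pi_1$-surjective. It is also the identity the paper later needs in Theorem 4.7 to fix the orientations of spherical summands.
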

\begin{proof}
Recall the top two rows of the diagram D$*$, which commute up to homotopy.
\[\begin{tikzcd}
	{W_0} & W & {W_1} \\
	{K(\pi_1(W_0),1)} & {K(\pi_1(W),1)} & {K(\pi_1(W_1),1)}
	\arrow["{\iota_0}", from=1-1, to=1-2]
	\arrow["{j_{W_0}}"', from=1-1, to=2-1]
	\arrow["{j_W}"', from=1-2, to=2-2]
	\arrow["{\iota_1}"', from=1-3, to=1-2]
	\arrow["{j_{W_1}}", from=1-3, to=2-3]
	\arrow["{\iota_0}", from=2-1, to=2-2]
	\arrow["{\iota_1^{-1}}"', shift right, from=2-2, to=2-3]
	\arrow["{\iota_1}"', shift right, from=2-3, to=2-2]
\end{tikzcd}\]
By Proposition 2.2 of \cite{FriedlMisevZentner2025RibbonRH}, $(\iota_0)_*([W_0])=(\iota_1)_*([W_1])$. Hence, the diagram above shows that $\alpha_*((j_{M_1})_*([M_1])+...+(j_{M_k})_*([M_k]))=(j_{M_1})_*([M_1])+...+(j_{M_k})_*([M_k])$ in $H_3$ (recall our previous discussion on $K(\pi,1)$ spaces). In particular, $(p_{\pi_1(M_i)})_*\circ \alpha_*:H_3(K(\pi_1(W_0),1))\rightarrow H_3(K(\pi_1(M_i),1))$ is not the zero map for any $i\in \{1,...,k\}$. By Lemma 4.3, for any $i\in\{1,...,k\}$, there must be $j\in \{1,...,k\}$ such that $\alpha(\pi_1(M_j))$ is contained in a conjugate of $\pi_1(M_i)$. That is, there is $\sigma\in S_{k}$ such that $\alpha(\pi_1(M_i))\subset g_iK_ig_i^{-1}$ where $g_i\in \pi_1(W_1)$ and $K_i\subset \pi_1(M_{\sigma(i)})$. 
By Proposition 2.2, if $i$ is such that $M_i$ is aspherical, then $K_i=\pi_1(M_{\sigma(i)})$. Since $\sigma$ is a permutation, there is $n$ such that $\alpha^n(\pi_1(M_i))$ is contained in a conjugate of itself (e.g. use $n$ the order of $\sigma$). Define $\alpha_{i,m}:\pi_1(M_{\sigma^{m}(i)})\rightarrow \pi_1(M_{\sigma^{m+1}(i)})$ to be the map such that $\alpha|_{\pi_1(M_{\sigma^m(i)})}=g_{\sigma^m(i)}\alpha_{i,m}g_{\sigma^m(i)}^{-1}$. Then $\alpha_{i,n-1}\circ ...\circ \alpha_{i,1}\circ \alpha_{i,0}$ is an injective map from $\pi_1(M_i)$ to $\pi_1(M_i)$. Hence, if $M_i$ is spherical, $\alpha_{i,n-1}\circ ...\circ \alpha_{i,1}\circ \alpha_{i,0}$ is an isomorphism. We then have that $\alpha_{i,n-1}$ is surjective. Since this map is injective as well, it is an isomorphism. By induction, $\alpha_{i,0}$ is an isomorphism - that is, when $M_i$ is spherical, $K_i=\pi_1(M_{\sigma(i)})$. Since all summands are irreducible, $\alpha(\pi_1(M_i))=g_i\pi_1(M_{\sigma(i)})g_{i}^{-1}$ for all $i\in \{1,...,k\}$.
\end{proof}
Lemma 4.4 allows us to prove the general case: when $W_0\leq W_1$ and $W_1 \leq W_0$ but $W_0$ and $W_1$ are not necessarily homeomorphic.
\begin{proposition}
Suppose that $W_0\leq W_1$ and $W_1\leq W_0$. Let $W$ be a rational homology ribbon cobordism from $W_0$ to $W_1$. Suppose that $W_0\cong M_1\#...\#M_{k_0}$ and $W_1\cong N_1\#...\#N_{k_1}$ where each $M_i$ and $N_i$ are irreducible. Then $k_0=k_1$ and there is a permutation $\sigma\in S_{k_0}$ such that $\alpha(\pi_1(M_i))=g_i\pi_1(N_{\sigma(i)})g_i^{-1}$.
\end{proposition}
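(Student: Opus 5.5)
Since $W_1\leq W_0$ as well, let $V$ be a rational homology ribbon cobordism from $W_1$ to $W_0$. Glue $W$ and $V$ along $W_1$ to get the rational homology ribbon cobordism $WV$ from $W_0$ to $W_0$, exactly as in the proofs of Proposition 3.3 and Theorem 4.2. By Theorem 1.5 and Corollary 1.6 of \cite{FriedlMisevZentner2025RibbonRH}, the injections
\[\alpha=(\iota_1)_*^{-1}\circ(\iota_0)_*:\pi_1(W_0)\to\pi_1(W_1),\qquad \beta=(\epsilon_1)_*^{-1}\circ(\epsilon_0)_*:\pi_1(W_1)\to\pi_1(W_0)\]
are defined. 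The composition $\gamma=(\delta_1)_*^{-1}\circ(\delta_0)_*$ for $WV$ is also defined, and the commutative diagram gives $\gamma=\beta\circ\alpha$. Lemma 4.4 applies to $WV$ with $W_0\cong W_0$. Hence there is $\tau\in S_{k_0}$ with $\gamma(\pi_1(M_i))=c_i\pi_1(M_{\tau(i)})c_i^{-1}$ for every $i$.

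Next, apply Lemma 2.1 to $\alpha$ and $\beta$ separately. For each $i$ we get an index $a(i)$ with $\alpha(\pi_1(M_i))\subset g_i\pi_1(N_{a(i)})g_i^{-1}$. For each $j$ we get an index $b(j)$ with $\beta(\pi_1(N_j))\subset h_j\pi_1(M_{b(j)})h_j^{-1}$. Then
\[\gamma(\pi_1(M_i))\subset \beta(g_i)h_{a(i)}\,\pi_1(M_{b(a(i))})\,h_{a(i)}^{-1}\beta(g_i)^{-1}.\]
We also have $\gamma(\pi_1(M_i))=c_i\pi_1(M_{\tau(i)})c_i^{-1}$, which is a nontrivial subgroup because irreducible summands have nontrivial $\pi_1$. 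In a free product, a nontrivial subgroup of one conjugate of a free factor meets no conjugate of a different factor. It follows that $b(a(i))=\tau(i)$. So $b\circ a=\tau$ is a bijection, and therefore $a:\{1,\dots,k_0\}\to\{1,\dots,k_1\}$ is injective and $k_0\leq k_1$. Running the same argument with $VW$, a cobordism from $W_1$ to $W_1$ with composition $\alpha\circ\beta$, shows that $a\circ b$ is a bijection. Hence $k_0=k_1$ and $\sigma:=a$ is a permutation.

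It remains to upgrade the inclusion $\alpha(\pi_1(M_i))\subset g_i\pi_1(N_{\sigma(i)})g_i^{-1}$ to an equality. Write $\alpha_i:\pi_1(M_i)\to\pi_1(N_{\sigma(i)})$ and $\beta_j:\pi_1(N_j)\to\pi_1(M_{b(j)})$ for the restrictions. Then $\gamma|_{\pi_1(M_i)}$ equals $\beta_{\sigma(i)}\circ\alpha_i$ up to conjugation. The lemma gives that $\gamma|_{\pi_1(M_i)}$ maps onto a conjugate of $\pi_1(M_{\tau(i)})$. Unwinding the conjugations, $\beta_{\sigma(i)}\circ\alpha_i$ maps onto $x\pi_1(M_{\tau(i)})x^{-1}$ for some $x$, and $x$ must lie in $\pi_1(M_{\tau(i)})$ because a conjugate of a free factor lying in that factor forces this in a free product. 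So $\beta_{\sigma(i)}\circ\alpha_i$ is surjective. Therefore $\beta_{\sigma(i)}$ is surjective, and being injective it is an isomorphism. Hence $\alpha_i$ is surjective, that is, $\alpha(\pi_1(M_i))=g_i\pi_1(N_{\sigma(i)})g_i^{-1}$.

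The main obstacle is the bookkeeping of conjugating elements in this last step, together with the free-product fact used twice: if $xPx^{-1}\subset yQy^{-1}$ with $P,Q$ free factors and $P\neq 1$, then $P=Q$ and $y^{-1}x\in P$. I would prove this fact via normal forms or Bass--Serre theory, using that a nontrivial vertex group fixes a unique vertex. If that route gets messy, a fallback is to argue as in Lemma 4.4. For aspherical $M_i$, Proposition 2.2 gives equality directly. For spherical $M_i$, use finiteness: the orders satisfy $|\pi_1(M_i)|\leq|\pi_1(N_{\sigma(i)})|\leq|\pi_1(M_{\tau(i)})|=|\pi_1(M_i)|$ along cycles of $\tau$, which forces every inclusion to be an equality.
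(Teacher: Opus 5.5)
Your proposal is correct and follows essentially the same route as the paper: glue $W$ and $V$, apply Lemma 4.4 to $WV$ (and symmetrically to $VW$), and combine Lemma 2.1 with the injectivity of $\alpha$ and $\beta$ to get $k_0=k_1$ and that $\alpha$ maps each $\pi_1(M_i)$ onto a full conjugate of some $\pi_1(N_{\sigma(i)})$. Your index bookkeeping ($b\circ a=\tau$, $a\circ b=\tau'$), the free-product fact (if $xPx^{-1}\subset yQy^{-1}$ with $P\neq 1$, then $P=Q$ and $y^{-1}x\in P$), and the surjectivity upgrade via $\beta_{\sigma(i)}$ supply exactly the details that the paper compresses into ``this can happen only if.''
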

\begin{proof}
Let $V$ be a rational homology ribbon cobordism where $W_1\leq W_0$. The cobordism $WV$ as constructed in Theorem 4.2 is a rational homology ribbon cobordism from $W_0$ to $W_0$. We label inclusions following the diagram in Theorem 4.2. Let $\beta:\pi_1(W_1)\rightarrow \pi_1(W_0)$ be the induced injective map on fundamental groups by $V$; that is, $\beta=(\epsilon_1)^{-1}_* \circ (\epsilon_0)_*$. Then $WV$ has induced map on fundamental groups $(\delta_1)_*^{-1}\circ (\delta_0)_*$ which is also given by $\beta\circ\alpha$. By Lemma 4.4, there is $\sigma\in S_{k_0}$ such that $\beta\circ\alpha(\pi_1(M_i))=g_{i,0}\pi_1(M_{\sigma(i)})g^{-1}_{i,0}$, where $g_{i,0}\in \pi_1(W_0)$. Since both $\alpha$ and $\beta$ are injective, this can happen only if $k_0\leq k_1$ and $\alpha(\pi_1(M_i))=g_{i,1}\pi_1(N_{j_i})g_{i,1}^{-1}$ for some $g_{i,1}\in \pi_1(W_1)$ and $j_i$ satisfying $j_i\neq j_b$ for $i\neq b$. Applying the same argument to the rational homology ribbon cobordism $VW$, we must also have that $k_1\leq k_0$, which concludes the proof.
\end{proof}
We require one more ingredient for proving Theorem 1.2. Suppose that $W_0 \leq W_1$ and $W_1\leq W_0$. In terms of proving Theorem 1.2, Theorem 3.5 shows that it suffices to assume that $W_0$ and $W_1$ have prime decomposition consisting of only irreducible summands. Suppose that $W$ is a rational homology ribbon cobordism where $W_0 \leq W_1$. Proposition 4.5 gives control over the fundamental group of irreducible summands, which, as we will see, deals with both aspherical summands and spherical summands that are not a lens space. For lens spaces, we use Theorem 1 of \cite{Eismeier2024Fourier}. We state this theorem below, as Theorem 4.6.
\begin{theorem}
Suppose that $L$ and $L'$ are connected sums of lens spaces. If $L$ and $L'$ are integer homology cobordant by a cobordism $W$, then $L$ is oriented diffeomorphic to $L'$, and the induced map $W_*:H_1(L)\rightarrow H_1(L')$ respects the natural direct sum decompositions. Furthermore, if $Y$ is any closed, oriented 3-manifold and $L\#Y$ is integer homology cobordant to $L'\#Y$, then $L$ is oriented diffeomorphic to $L'$.
\end{theorem}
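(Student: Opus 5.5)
Theorem 4.6 is quoted from \cite{Eismeier2024Fourier} and is used in this paper as a black box. The following is only a sketch of how I would try to reprove it, not a summary of that paper's argument; I expect its main idea to be similar in spirit. The basic tool is the family of Heegaard Floer correction terms $d(Y,\mathfrak{s})$.

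\textbf{Step 1: transport the invariants.} An integer homology cobordism $W$ from $L$ to $L'$ restricts to isomorphisms $H_1(L)\cong H_1(W)\cong H_1(L')$. It also gives a bijection $\mathrm{Spin}^c(L)\cong \mathrm{Spin}^c(W)\cong\mathrm{Spin}^c(L')$ that is affine over this group isomorphism. Since $W$ is a rational homology cobordism, $d(L,\mathfrak{s}|_L)=d(L',\mathfrak{s}|_{L'})$ for every $\mathfrak{s}\in \mathrm{Spin}^c(W)$. So, after choosing base points such as the self-conjugate structures, the function $d_L$ on the torsor is carried to $d_{L'}$ by an affine isomorphism $\phi$.

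\textbf{Step 2: use additivity and Fourier analysis.} For $L=L(p_1,q_1)\#\cdots\#L(p_n,q_n)$ one has
\[
d_L(\mathfrak{s}_1\#\cdots\#\mathfrak{s}_n)=\sum_i d_{L(p_i,q_i)}(\mathfrak{s}_i).
\]
Hence for a formal parameter $t$ the function $E_L=\exp(t\,d_L)$ is a tensor product $\bigotimes_i E_{L(p_i,q_i)}$ of functions on the cyclic factors. I would take Fourier transforms over the finite abelian group $H_1(L)$, using the explicit recursive formula for lens space $d$-invariants. From the transforms I would show that the factorization of $E_L$ into functions supported on cyclic subgroups is unique up to permuting factors. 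The aim is that the only affine isomorphisms preserving $d$ are those carrying each lens space summand's $H_1$ onto another's, which is the statement that $W_*$ respects the direct sum decompositions.

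\textbf{Step 3: recover each summand.} Once the splitting is preserved, each lens space summand of $L$ is matched with a summand of $L'$ with the same $d$-invariant function up to an affine identification. The $d$-invariants of $L(p,q)$, labelled this way, determine $q$ modulo $p$ up to $q\mapsto q^{-1}$. That is exactly the oriented classification of lens spaces, so $L\cong L'$.

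\textbf{Step 4: the stabilized statement.} For $L\#Y$ versus $L'\#Y$, I would run the same argument on
\[
d_{L\#Y}=d_L\oplus d_Y, \qquad E_{L\#Y}=E_L\otimes E_Y.
\]
The extra complication is that the cobordism map need not respect the splitting $H_1(L)\oplus H_1(Y)$. To handle this I would first compare $E_L\otimes E_Y$ with $E_{L'}\otimes E_Y$ as multisets of values, or through their Fourier spectra. I would then cancel $E_Y$, for instance by comparing cyclic-group characters of order dividing $|H_1(L)|$. That should reduce the question to Step 2.

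\textbf{Expected main obstacle.} The hardest step is the uniqueness of the tensor factorization in Step 2. A general function on a finite abelian group can factor in non-unique ways along different splittings. So this step needs genuine arithmetic input about lens space $d$-invariants, such as nondegeneracy of their Fourier coefficients. The cancellation of the unknown $Y$ in Step 4 is a close second.
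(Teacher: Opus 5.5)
The paper gives no argument for this statement. It is Theorem~1 of \cite{Eismeier2024Fourier}, quoted and used as a black box in the last step of Theorem~4.7, so your outline has to stand on its own. As written it does not, because it has genuine gaps.

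\textbf{Step 2.} The decisive claim, that every affine isomorphism carrying $d_L$ to $d_{L'}$ maps summands to summands, is announced but not proved. The multiplicative framing also hides the mechanism: the Fourier transform of a tensor product has product support, which is typically everything. Work additively instead. Put $\widehat{f}(\chi)=|H|^{-1}\sum_x f(x)\overline{\chi(x)}$.
\begin{itemize}
\item Additivity $d_L(x_1,\dots,x_n)=\sum_i d_i(x_i)$ forces $\widehat{d_L}(\chi)=0$ whenever $\chi$ is nontrivial on two or more summands.
\item If $d_{L'}\circ\phi=d_L$ with $\phi(x)=Ax+t$, then $\widehat{d_L}(\chi)=\chi(A^{-1}t)\,\widehat{d_{L'}}(\chi\circ A^{-1})$. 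So $\chi\mapsto\chi\circ A^{-1}$ carries support to support.
\item Suppose no nontrivial Fourier coefficient of a lens space $d$-function vanishes. Then the support on nontrivial characters is exactly the union of the dual cyclic summands minus the identity.
\item A group isomorphism carrying one such union onto another must permute the cyclic subgroups (follow a generator of each). Dualizing gives the splitting.
\end{itemize}
That nonvanishing is the arithmetic input your sketch lacks. For $L(p,1)$, where $d(x)=\frac{(p-2x)^2-p}{4p}$ for $0\le x<p$ (Ozsv\'ath--Szab\'o normalization), one gets $\widehat{d}(k)=\frac{1}{2p\sin^{2}(\pi k/p)}$ for $k\neq 0$. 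In general the coefficients are, up to a unit and the factor $2/p$, the inverse Reidemeister torsions $\bigl((\zeta^{k}-1)(\zeta^{qk}-1)\bigr)^{-1}$ with $\zeta=e^{2\pi\sqrt{-1}/p}$. One route to this is Rustamov's identification of $d$ of an L-space with Turaev torsion up to a constant.

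\textbf{Step 3.} It has the same hole. The splitting only gives $d'_{\sigma(i)}\circ\phi_i=d_i+c_i$ with $\sum_i c_i=0$, not equality. The claim that $d$ determines $q$ up to $q^{\pm1}$ is asserted without proof. With the torsion formula it becomes a Franz-independence argument on the nonzero modes, plus an orientation check from the signs, and that still has to be carried out.

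\textbf{Step 4.} It does not go through as described, for three reasons.
\begin{itemize}
\item $Y$ is arbitrary. In the paper's own application (Theorem~4.7) it is the sum of all non-lens-space summands, which may be aspherical with $b_1>0$, e.g.\ $T^3$. Then there is no function $d_Y$ on $\mathrm{Spin}^c(Y)$: $d$-invariants exist only for torsion classes, and only when $HF^\infty$ is standard. So $d_{L\#Y}=d_L\oplus d_Y$ is unavailable without further work or a different invariant.
\item Even for a rational homology sphere $Y$, cancelling $Y$ from value multisets does not return you to Step~2. The value distribution of $d_{L\#Y}$ is the convolution of those of $d_L$ and $d_Y$, so $Y$ does cancel, e.g.\ by a Laplace transform. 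But what survives is only the value distribution of $d_L$; the group structure, and with it any affine isomorphism to $d_{L'}$, is gone.
\item Restricting to characters of order dividing $|H_1(L)|$ does not isolate $L$ once $|H_1(Y)|$ shares a prime with $|H_1(L)|$, and the cobordism isomorphism need not respect the splitting.
\end{itemize}
What does cancel cleanly is the multiset of nonzero values $|\widehat{d}(\chi)|$ over nontrivial $\chi$, which is the disjoint union of an $L$-part and a $Y$-part. But then you need a separate Franz-type result that this multiset determines the lens space summands, plus a separate handle on orientation. That is a genuinely different statement, not a reduction to Step~2.
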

In particular, in the following theorem, we show that each summand that is not a lens space is oriented homeomorphic to its corresponding summand under $\alpha$ given by Proposition 4.5, then we use Theorem 4.6 to finish.
\begin{theorem}
Let $W_0$ and $W_1$ be closed, connected, oriented 3-manifolds with no $S^1 \times S^2$ summands. Suppose that $W_0 \leq W_1$ and $W_1\leq W_0$. Then $W_0$ and $W_1$ are oriented homeomorphic.
\end{theorem}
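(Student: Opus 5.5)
We combine Proposition 4.5, Proposition 2.2 and Theorem 4.6. Write $W_0\cong M_1\#\cdots\#M_{k}$ and $W_1\cong N_1\#\cdots\#N_{k}$ with all summands irreducible. Proposition 4.5 gives $k_0=k_1=k$ and a permutation $\sigma$ with $\alpha(\pi_1(M_i))=g_i\pi_1(N_{\sigma(i)})g_i^{-1}$. By Theorem 4.2, $W$ is an integer homology cobordism. Each irreducible summand is either aspherical or spherical (elliptic, $\pi_1$ finite), and $\sigma$ preserves this type, since $\pi_1(M_i)\cong\pi_1(N_{\sigma(i)})$.

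\textbf{Aspherical summands.} Proposition 2.2 applies directly: for aspherical $M_i$ it yields an orientation preserving homeomorphism $M_i\to N_{\sigma(i)}$ inducing $\alpha_i$.

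\textbf{Spherical summands that are not lens spaces.} Here I mimic Proposition 2.2 using diagram D$*$ in degree $3$ of group homology. By Proposition 2.2 of \cite{FriedlMisevZentner2025RibbonRH}, $(\iota_0)_*[W_0]=(\iota_1)_*[W_1]$. Projecting via $q_{N_{\sigma(i)}}$ then shows that the isomorphism $\alpha_i:\pi_1(M_i)\to\pi_1(N_{\sigma(i)})$ sends the image of $[M_i]$ in $H_3(K(\pi_1(M_i),1))\cong\mathbb{Z}/|\pi_1|$ to the image of $[N_{\sigma(i)}]$. This uses that $\sigma$ is a bijection, so only the $i$-th summand contributes to the $\sigma(i)$-th projection, as in Lemma 4.3.

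For spherical space forms with non-cyclic $\pi_1$, the oriented homeomorphism type is determined by $\pi_1$ together with the fundamental class in $H_3$. Equivalently, an isomorphism of fundamental groups of non-lens spherical manifolds is induced by a homeomorphism (rigidity of elliptic manifolds), and the orientation is detected by the $H_3$ class. So $M_i\cong N_{\sigma(i)}$ orientedly. I expect this to be the main obstacle: one must cite the correct classification statement for elliptic manifolds and check the orientation bookkeeping. Prism, tetrahedral, octahedral and icosahedral manifolds, and products with cyclic groups, each need the statement that every automorphism-plus-$H_3$-generator compatible pair is realized. I would cite the known result, for instance Theorem 2.4 of \cite{FriedlMisevZentner2025RibbonRH} if it covers this case, or the classical classification of spherical space forms.

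\textbf{Lens space summands.} Collect the remaining summands as $W_0\cong L\#Y$ and $W_1\cong L'\#Y'$, where $L$ and $L'$ are the connected sums of the lens space summands. The matching above gives $Y\cong Y'$ orientedly, since $\sigma$ matches non-lens summands homeomorphically. Lens summands go to lens summands because cyclic $\pi_1$ is preserved. Thus $W$ is an integer homology cobordism from $L\#Y$ to $L'\#Y$. The last clause of Theorem 4.6 then gives $L\cong L'$ orientedly, hence $W_0\cong W_1$.
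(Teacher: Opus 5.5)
Your overall route is the paper's. You use Proposition 4.5 for the matching $\sigma$, Proposition 2.2 for aspherical summands, an $H_3$ computation through D$*$ for non-lens spherical summands, and the last clause of Theorem 4.6 for the lens spaces, with Theorem 4.2 giving the integer homology cobordism. Your remark that bijectivity of $\sigma$ is what lets the $\sigma(i)$-th projection see only $M_i$ is also right.

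\textbf{The gap is the non-lens spherical step.} Your claim that $\pi_1$ together with the image of the fundamental class in $H_3$ determines the oriented homeomorphism type is true. But it is the entire content of the step, and you leave it to a citation you have not pinned down.

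Worse, the ``equivalent'' form you suggest citing is false. A homeomorphism sends the image of the fundamental class in $H_3(K(\pi_1,1))\cong\mathbb{Z}/|\pi_1|$ to $\pm$ itself, whereas group automorphisms can act on this group by other units. For the prism manifold with $\pi_1=Q_8\times\mathbb{Z}/5$, the automorphism $\mathrm{id}\times(x\mapsto x^2)$ acts on $H_3\cong\mathbb{Z}/8\oplus\mathbb{Z}/5$ by $(1,-1)$, i.e.\ by $9\not\equiv\pm1\pmod{40}$. So it is induced by no homeomorphism, indeed by no homotopy equivalence. The $H_3$ condition is therefore not orientation bookkeeping: it is what makes a homotopy equivalence exist. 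The algebraic data must be turned into an actual map of manifolds before any classification applies.

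\textbf{How the paper closes this step.} Since $K(\pi_1(N_{\sigma(i)}),1)$ is obtained from $N_{\sigma(i)}$ by attaching cells of dimension at least $4$, cellular approximation homotopes $f\circ j_{M_i}$ (with $f$ the bottom map of D$*$) into $N_{\sigma(i)}$. This gives a map $f_r\colon M_i\to N_{\sigma(i)}$ inducing $\alpha_i$ on $\pi_1$, with $j_{N_{\sigma(i)}}\circ f_r\simeq f\circ j_{M_i}$.
\begin{itemize}
\item Your $H_3$ identity then yields $\deg f_r\equiv 1\pmod{|\pi_1(M_i)|}$. Since $|\pi_1(M_i)|>2$, this excludes degree $-1$; the paper uses this to rule out an orientation-reversing identification.
\item If needed, alter $f_r$ on a small ball by a multiple of the covering map $S^3\to N_{\sigma(i)}$, which has degree $|\pi_1|$. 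Then $f_r$ has degree one.
\item Its lift $S^3\to S^3$ then has degree one, so $f_r$ is an orientation-preserving homotopy equivalence.
\end{itemize}
Only then is a classification invoked: Section 1.3 of \cite{mccullough2000isometries}, that a spherical 3-manifold other than a lens space admits a degree-one homotopy equivalence only to a manifold oriented homeomorphic to it. Theorem 2.4 of \cite{FriedlMisevZentner2025RibbonRH} is used in the paper only for aspherical summands. With this step inserted, your proof coincides with the paper's.
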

\begin{proof}
Write prime decompositions $W_0\cong M_1\#...\#M_{k_0}$ and $W_1\cong N_1\#...\#N_{k_1}$. Let $W$ be a rational homology ribbon cobordism from $W_0$ to $W_1$. By Proposition 4.5, $k_0=k_1$ and the map $\alpha:\pi_1(W_0)\rightarrow \pi_1(W_1)$ is such that there is a permutation $\sigma\in S_{k_0}$ for which $\alpha(\pi_1(M_i))=g_i\pi_1(N_{\sigma(i)})g_i^{-1}$ where $g_i \in \pi_1(W_1)$. By Proposition 2.2, $M_i\cong N_{\sigma(i)}$ if $M_i$ is aspherical. If $M_i$ is spherical and not a lens space, then $M_i\cong \pm N_{\sigma(i)}$, since such manifolds are determined by their fundamental group up to orientation. To see that they must be oriented homeomorphic, we consider the diagram D$*$ from Section 2, given by
\[\begin{tikzcd}
	& {W_0} & W & {W_1} & \\
	& {K(\pi_1(W_0),1)} & {K(\pi_1(W),1)} & {K(\pi_1(W_1),1)} \\
	{M_i} & {K(\pi_1(M_i),1)} && {K(\pi_1(N_{\sigma(i)}),1)} & {N_{\sigma(i)}}
	\arrow["{\iota_0}", from=1-2, to=1-3]
	\arrow["{j_{W_0}}"', from=1-2, to=2-2]
	\arrow["{p_{M_i}}"', shift right=2, from=1-2, to=3-1]
	\arrow["{j_W}"', from=1-3, to=2-3]
	\arrow["{\iota_1}"', from=1-4, to=1-3]
	\arrow["{j_{W_1}}", from=1-4, to=2-4]
	\arrow["{p_{N_{\sigma(i)}}}", shift left=2, from=1-4, to=3-5]
	\arrow["{\iota_0}", from=2-2, to=2-3]
	\arrow["{q_{M_i}}"', from=2-2, to=3-2]
	\arrow["{\iota_1^{-1}}"', shift right, from=2-3, to=2-4]
	\arrow["{\iota_1}"', shift right, from=2-4, to=2-3]
	\arrow["{q_{N_{\sigma(i)}}}", from=2-4, to=3-4]
	\arrow["{j_{M_i}}", from=3-1, to=3-2]
	\arrow["{f}", from=3-2, to=3-4]
	\arrow["{j_{N_{\sigma(i)}}}"', from=3-5, to=3-4]
\end{tikzcd}\]
Since the construction of $K(\pi,1)$ of a spherical 3-manifold requires cells of dimension only 4 and higher, the map $f\circ j_{M_i}$ may be homotoped so that $(f\circ j_{M_i})(M_i)\subset j_{N_{\sigma(i)}}(N_{\sigma(i)})$. Denote this map $f\circ j_{M_i}$ with codomain restricted to $N_{\sigma(i)}$ by $f_r$. Commutativity of the diagram in $H_3$ and the relation $j_{N_{\sigma(i)}}\circ f_r=f\circ j_{M_i}$ shows that $f_r([M_i])=\pm[N_{\sigma(i)}]$. If $f_r([M_i])=-[N_{\sigma(i)}]$, then the relation $j_{N_{\sigma(i)}}\circ f_r=f\circ j_{M_i}$ implies that $f_*((j_{M_i})_*([M_i]))=-(j_{N_{\sigma(i)}})_*([N_{\sigma(i)}])$ in $H_3$. But Proposition 2.2 of \cite{FriedlMisevZentner2025RibbonRH} and the diagram above show that $f_*((j_{M_i})_*([M_i]))=(j_{N_{\sigma(i)}})_*([N_{\sigma(i)}])$ and since $(j_{N_{\sigma(i)}})_*([N_{\sigma(i)}])\neq -(j_{N_{\sigma(i)}})_*([N_{\sigma(i)}])$ (recall our discussion of $K(\pi,1)$ spaces after the proof of Theorem 4.2), we must have that $(f_r)_*([M_i])=[N_{\sigma(i)}]$. Hence, the map $f_r$ is a degree 1 homotopy equivalence. Section 1.3 of \cite{mccullough2000isometries} states that any oriented spherical 3-manifold which is not a lens space may only admit a degree 1 homotopy equivalence to itself, showing that $M_i\cong N_{\sigma(i)}$. 

At this point, we have shown that $M_i \cong N_{\sigma(i)}$ whenever $M_i$ is not a lens space. Furthermore, by Proposition 4.5, $\pi_1(M_i)\cong \pi_1(N_{\sigma(i)})$ for all $i$, and so whenever $M_i$ is a lens space, $N_{\sigma(i)}$ is as well. Thus, there is a connected oriented 3-manifold $Y$ such that $W_0 \cong Y\#L$ and $W_1 \cong Y \#L'$ where $L$ and $L'$ are connected sums of lens spaces. By Theorem 4.6, $L$ and $L'$ are oriented homeomorphic. We obtain that $W_0$ and $W_1$ are oriented homeomorphic.
\end{proof}
Combining Theorem 3.5 and Theorem 4.7 yields Theorem 1.2.
\begin{corollary}[Theorem 1.2]
Suppose that $W_0 \leq W_1$ and $W_1 \leq W_0$. Then $W_0$ and $W_1$ are oriented homeomorphic.
\end{corollary}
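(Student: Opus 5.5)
The corollary should follow by combining the two halves of the paper, so the plan is simply to chain Theorem 3.5 with Theorem 4.7.

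First, suppose $W_0 \leq W_1$ and $W_1 \leq W_0$. Apply Theorem 3.5 to conclude three things: $W_0$ and $W_1$ have the same number $l$ of $S^1 \times S^2$ summands, $W_0' \leq W_1'$, and $W_1' \leq W_0'$. Here $W_i'$ is the manifold obtained by deleting the $S^1\times S^2$ summands from the prime decomposition.

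Next, observe that by construction $W_0'$ and $W_1'$ are closed, connected, oriented 3-manifolds whose prime decompositions consist only of irreducible summands. They therefore satisfy the hypotheses of Theorem 4.7. That theorem then gives an orientation preserving homeomorphism $W_0' \cong W_1'$.

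Finally, reassemble the summands. Since the connected sum of oriented manifolds is well defined up to oriented homeomorphism, we get
\[
W_0 \cong W_0' \# l(S^1\times S^2) \cong W_1' \# l(S^1\times S^2) \cong W_1 .
\]
Equivalently, this is exactly the second statement of Theorem 3.5, with Theorem 4.7 supplying its hypothesis that Theorem 1.2 holds on manifolds without $S^1\times S^2$ summands.

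I expect no step here to be an obstacle, since all the substantive work is contained in Theorems 3.5 and 4.7. The one point to check is that the reduction in Theorem 3.5 really does produce cobordisms in both directions between $W_0'$ and $W_1'$. It does, because Theorem 3.5 applies Proposition 3.4 to $W$ and, symmetrically, to the reverse cobordism $V$.
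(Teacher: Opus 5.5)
Your proposal is correct and is exactly the paper's argument. The paper obtains the corollary by feeding the conclusion of Theorem 3.5 ($W_0'\leq W_1'$, $W_1'\leq W_0'$, with equal numbers $l$ of $S^1\times S^2$ summands) into Theorem 4.7, then reassembling $W_0\cong W_0'\# l(S^1\times S^2)\cong W_1'\# l(S^1\times S^2)\cong W_1$ as in the second part of Theorem 3.5's proof.
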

\bibliographystyle{alpha}
\bibliography{references2}

\end{document}